\documentclass{elsarticle}
\usepackage{amsmath}
\usepackage{amsthm}
\usepackage{amssymb}
\usepackage{amscd}
\usepackage{stmaryrd}
\usepackage{amsfonts}
\usepackage{amsbsy}
\usepackage{epsfig,afterpage}
\usepackage[all]{xy}
\usepackage{graphicx}
\usepackage{color}
\usepackage{transparent}
\usepackage{subfigure}
\usepackage{import}
\usepackage[dvips]{psfrag}
\usepackage{color} 
\usepackage{enumerate}
\usepackage{overpic}
\usepackage{xcolor}
\usepackage{lineno}
\modulolinenumbers[5]
\usepackage{multicol}
\usepackage{lipsum}


\makeatletter
\DeclareFontFamily{U}{tipa}{}
\DeclareFontShape{U}{tipa}{m}{n}{<->tipa10}{}
\newcommand{\ark@char}{{\usefont{U}{tipa}{m}{n}\symbol{62}}}%

\newcommand{\ark}[1]{\mathpalette\ark@arc{$#1$}}

\newcommand{\ark@arc}[2]{%
	\sbox0{$\m@th#1#2$}%
	\vbox{
		\hbox{\resizebox{\wd0}{\height}{\ark@char}}
		\nointerlineskip
		\box0
	}%
}
\makeatother

\newtheorem {theorem} {Theorem} 

\newtheorem {proposition} {Proposition}

\newtheorem {lemma} {Lemma}
\newtheorem {definition} {Definition}
\newtheorem {remark} {Remark}

\bibliographystyle{elsarticle-num}

\begin{document}

\begin{frontmatter}
	
\title{Nonexistence results for a degenerate Goursat type Problem}

\author[1]{Olimpio Hiroshi Miyagaki}
\ead{olimpio@ufscar.br}
\author[2]{Carlos Alberto Reyes Peña\corref{cor1}}
\ead{carlos.reyes@estudante.ufscar.br}
\author[3]{Rodrigo da Silva Rodrigues}
\ead{rodrigorodrigues@ufscar.br}

\cortext[cor1]{Corresponding author}

\address[1]{olimpio@ufscar.br}
\address[2]{carlos.reyes@estudante.ufscar.br}
\address[3]{rodrigorodrigues@ufscar.br}




\begin{abstract}
For a generalization of the Gellerstedt operator with Dirichlet boundary conditions in a Tricomi domain. We establish Pohožaev-type identities and prove the nonexistence of nontrivial regular solutions. Furthermore, we investigate the critical exponent phenomenon for power-type nonlinearities, characterized by the critical exponent of a weighted Sobolev embedding.
\end{abstract}
\begin{keyword}
 Mixed-type partial equations \sep Gellerstedt operator \sep weighted Sobolev space \sep critical exponent \sep Pohožaev-type identities \sep nonexistence of solutions.
	\MSC[2020] 35M12 \sep 46E35 \sep 35B33 \sep 35A01 
 
\end{keyword}
\end{frontmatter}


\section{Introduction}\label{intro}

The mixed-type Dirichlet problem

\begin{equation}\label{PP}
\begin{cases} 
\mathcal{O}(u) :=-y^{m_1}u_{xx}-x^{m_2}u_{yy}=f(u) &\mbox{in}\quad \Omega, \\
\quad \ \ \  u=0  &\mbox{on}\quad \Gamma\subset\partial\Omega,
\end{cases}
\end{equation}
where $\Omega$ is a bounded, open, simply connected subset of $\mathbb{R}^2$ with piecewise $C^1$ boundary $\partial\Omega$, $f\in C^0(\mathbb{R})$ with $m_1, m_2\in\mathbb{N}.$

\vspace{0,3cm}
The significance of studying problem \eqref{PP} lies in the fact that several well-known operators can be seen as particular cases of the operator $\mathcal{O} = -y^{m_1}\partial^2_x - x^{m_2}\partial^2_y$ introduced in this paper. For instance, the Laplacian operator $-\Delta = -\partial^2_x - \partial^2_y$ as studied in \cite{brezisnirenberg}, \cite{brezis}, \cite{giltru}, the Tricomi operator $T = -y\partial^2_x - \partial^2_y$ in \cite{tricom}, \cite{lupopaynepopivanov}, and the Gellerstedt operator $L = -y^m\partial^2_x - \partial^2_y$ in \cite{gellerstedt}, \cite{lupopayne}, correspond to special cases when $m_1 = m_2 = 0$, $m_1 = 1$, $m_2 = 0$, and $m_2 = 0$, respectively. The operator $\mathcal{O}$ is a specific instance of the well-known Grushin operator $G = \Delta_x + (\alpha+1)^2 |x|^{2\alpha} \Delta_y$ in dimension 2, when $m_1 = 0$ and $\alpha \in \mathbb{N}$ (see \cite{montimorbidelli}, \cite{monti}, \cite{angelo}, \cite{liutangwang}, \cite{dousunwangzhu}). Nonexistence results for problems involving these operators are crucial in the theory of PDEs, particularly in the context of semilinear elliptic equations. Regarding the nonexistence of regular solutions for problems of the form

\begin{align}\label{PD}
\begin{cases} 
-\Delta u=-u_{xx}-u_{yy}=f(u) &\mbox{in}\quad \Omega, \\
\quad \;  \;u=0 &\mbox{on}\quad \partial\Omega,
\end{cases}
\end{align}
and
\begin{equation}\label{PL}
\begin{cases} 
L(u)=-y^{m}u_{xx}-u_{yy}=f(u) &\mbox{in}\quad \Omega, \\
\quad\ \;u=0, &\mbox{on}\quad \Gamma\subset\partial\Omega,
\end{cases}
\end{equation}

the critical growth plays an important and an crucial role. In \cite{pohozaev}, it was shown that problem \eqref{PD} does not admit any nontrivial sufficiently regular solutions in a bounded domain $\Omega\subset\mathbb{R}^n$ with $n\geq 3$ if $\Omega$ is star-shaped with respect to some interior point and if $f(u) = |u|^\alpha$ with $\alpha \geq 2^*-1 = \frac{n+2}{n-2}$, where $2^*$ is the well-known critical exponent in the Sobolev embedding $W^{1,2}(\Omega) \hookrightarrow L^p(\Omega)$ for $1 \leq p \leq 2^*$. Similarly, in \cite{lupopayne}, \cite{lupopaynepopivanov}, it was demonstrated that problem \eqref{PL} admits no nontrivial sufficiently regular solutions in a Tricomi domain $\Omega\subset\mathbb{R}^2$ if $\Omega$ is star-shaped with respect to the flow of $D=-(m+2)x\partial_x-2y\partial_y$, and if $f(u) = |u|^\alpha$ with $\alpha > 2^*(m) - 1 = \frac{m+8}{m}$, where $2^*(m)$ is the critical exponent in the weighted Sobolev embedding $W^{1,2}_m(\Omega) \hookrightarrow L^p(\Omega)$ for $1 \leq p < 2^*(m)$. Sabitova, in \cite{Sabitova}, studied an eigenvalue problem involving the Chaplygin-type operator 
$(sign \, y) |y|^m u_{xx} +u_{yy}-\lambda |y|^m u=0$, proving uniqueness and nonexistence results for $\lambda$ in certain ranges. These results, along with others found in \cite{brezisnirenberg}, \cite{brezis}, \cite{franchilanconelli}, \cite{montimorbidelli}, \cite{monti}, \cite{angelo}, \cite{dousunwangzhu}, among others, motivated the study of the nonexistence of regular solutions for problem \eqref{PP}, which is developed throughout this paper. To achieve this, we consider a family of nonhomogeneous dilations that maintain coordinate invariance in the solution of the homogeneous equation associated with problem \eqref{PP}, along with a weighted Sobolev space naturally related to the operator $\mathcal{O}$. In Section \ref{Sec2}, we demonstrate the critical exponent phenomenon in the embedding of this space with respect to dilation.

\vspace{0,3cm}

Another relevant result presented in this article is the construction of the Poho\^{z}aev-type identity for problem \eqref{PP} inspired by the classic argument used for problem \eqref{PD} on a bounded domain found in \cite{pohozaev} and recovered in \cite{lupopayne} for problem \eqref{PL} on a class of domains related to the operator (Tricomi domains for the operator) and also in \cite{lupopaynepopivanov} for the problem \eqref{PL} with $m=1$. Other more recent works for the Grushin operator involving Poho\^zaev-type identities are \cite{liutangwang} and \cite{pengwangyang}. In addition, we explore some domain variations for problem \eqref{PP} as presented in Section \ref{Sec3}. Finally, we prove the main results of this paper in Section \ref{Sec4}, where we conclude by using the Poho\^{z}aeh-type identity and the Hardy-Sobolev inequality that the problem \eqref{PP} on the domains of the Section \ref{Sec3} do not have sufficiently regular nontrivial solutions. 

\vspace{0,3cm}
During of the writing of this article we find several difficulties, some of which are practical difficulties such as knowing of the domains of the solution, because $m_1$ and $m_2$ are arbitrary but need to have specific conditions for its existence. Another difficulty, as in every research, is the standard computations; in our case, not only the exponents of the operator, also the term $x^{m_2}$ influenced the search for an other more particular domain to be able to overcome these obstacles, see Theorem \ref{T6}. Nonetheless, the primary challenge was working with weighted Sobolev spaces, we needed to define a weighted gradient $\nabla_{m_1,m_2}u=(|y|^{\frac{m_1}{2}}u_x,|x|^{\frac{m_2}{2}}u_y)$ and the operator $\mathcal{X}u=(-y^{m_1}u_x,-x^{m_2}u_y)$. We observed that the relationship $\Delta u=div(\nabla u)$ and the norm $||u||_{H^1(\Omega)}^2=||\nabla u||_{L^2(\Omega)}^2+||u||_{L^2(\Omega)}^2$ are crucial for studying nonexistence results, as the norm of the solution space and the operator depend on the gradient. Note that, when $m_1=m_2=0$ we get $\nabla_{m_1,m_2} u=\nabla u$ and $\mathcal{X}u=(-u_x,-u_y)=-\nabla u$. Notice that, $\nabla_{m_1,m_2} u=-\mathcal{X}u$ when $m_1=m_2=0$. However, in general, this relationship does not hold. In conclusion, the norm of the solution space depends on the weighted gradient, which differs from the operator. Specifically, $||u||_{H^1_{m_1,m_2}}^2=||\nabla_{m_1,m_2} u||_{L^2(\Omega)}^2+||u||_{L^2(\Omega)}^2
$, while the operator $\mathcal{O}u=div(\mathcal{X}u)\neq div(-\nabla_{m_1,m_2} u)$.

\vspace{0,3cm}
We can not end the introduction without first announcing some of the important results found in this paper.

\begin{proposition}
Let $m_1,m_2\in\mathbb{N}$ given. Suppose there exists $C>0$ independent of $u$ such that
\begin{equation*}
    ||u||_{L^p(\Omega)}\leq C||\nabla_{m_1,m_2}u||_{L^2(\Omega)}, \ \ \forall \ u\in C^\infty_0(\Omega).
\end{equation*}
Then, $1\leq p\leq 2^*(m_1,m_2):=\frac{2(m_1+m_2)+8}{m_1+m_2+m_1m_2}\cdot$
\end{proposition}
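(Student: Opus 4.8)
The plan is to exploit the one–parameter family of anisotropic dilations naturally attached to the operator $\mathcal{O}$, namely
\[
T_\lambda u(x,y):=u\bigl(\lambda^{m_1+2}x,\ \lambda^{m_2+2}y\bigr),\qquad \lambda>0,
\]
which is the scaling generated by the vector field $D=-(m_1+2)x\partial_x-(m_2+2)y\partial_y$ discussed in Section~\ref{Sec2} and under which the homogeneous equation associated with \eqref{PP} is invariant. The first step is a change–of–variables computation: putting $\xi=\lambda^{m_1+2}x$, $\eta=\lambda^{m_2+2}y$ gives $dx\,dy=\lambda^{-(m_1+m_2+4)}\,d\xi\,d\eta$, hence
\[
\|T_\lambda u\|_{L^p(\mathbb{R}^2)}^p=\lambda^{-(m_1+m_2+4)}\,\|u\|_{L^p(\mathbb{R}^2)}^p .
\]
For the weighted gradient, after the same substitution the first component is $|y|^{m_1/2}\partial_x(T_\lambda u)=\lambda^{\,m_1+2-\frac{m_1}{2}(m_2+2)}\,|\eta|^{m_1/2}(\partial_\xi u)(\xi,\eta)$ and the second is $|x|^{m_2/2}\partial_y(T_\lambda u)=\lambda^{\,m_2+2-\frac{m_2}{2}(m_1+2)}\,|\xi|^{m_2/2}(\partial_\eta u)(\xi,\eta)$; the whole reason for the choice of the dilation weights $m_1+2$ and $m_2+2$ is that the two resulting powers of $\lambda$ in the corresponding $L^2$–norms coincide, both equal to $-(m_1+m_2+m_1m_2)$, so that
\[
\|\nabla_{m_1,m_2}(T_\lambda u)\|_{L^2(\mathbb{R}^2)}^2=\lambda^{-(m_1+m_2+m_1m_2)}\,\|\nabla_{m_1,m_2}u\|_{L^2(\mathbb{R}^2)}^2 .
\]

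Next I would fix a nontrivial $\varphi\in C^\infty_0(\Omega)$ and apply the hypothesis to $T_\lambda\varphi$. One must first check that $T_\lambda\varphi\in C^\infty_0(\Omega)$ for all $\lambda$ in a suitable range: for $\lambda\ge1$ the map $(x,y)\mapsto(\lambda^{-(m_1+2)}x,\lambda^{-(m_2+2)}y)$ is the time–$\log\lambda$ map of the forward flow of $D$, and $\operatorname{supp}(T_\lambda\varphi)$ is exactly the image of $\operatorname{supp}\varphi$ under that map, which contracts toward the origin; thus $\operatorname{supp}(T_\lambda\varphi)\subset\Omega$ for every $\lambda\ge1$ once $\Omega$ is star–shaped with respect to the flow of $D$ (the standing hypothesis on the Tricomi domains considered in this paper; if $0\in\Omega$ it is automatic, and in general it suffices to take $\varphi$ supported in a flow–invariant neighborhood of the origin inside $\Omega$). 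Feeding the two scaling identities into $\|T_\lambda\varphi\|_{L^p(\Omega)}\le C\|\nabla_{m_1,m_2}(T_\lambda\varphi)\|_{L^2(\Omega)}$ yields, for all $\lambda\ge1$,
\[
\|\varphi\|_{L^p(\Omega)}\le C\,\lambda^{\,\frac{m_1+m_2+4}{p}-\frac{m_1+m_2+m_1m_2}{2}}\,\|\nabla_{m_1,m_2}\varphi\|_{L^2(\Omega)} .
\]
Since the left–hand side is a fixed positive number and the factor $C\|\nabla_{m_1,m_2}\varphi\|_{L^2(\Omega)}$ is independent of $\lambda$, letting $\lambda\to+\infty$ forces the exponent of $\lambda$ to be nonnegative, i.e. $\frac{m_1+m_2+4}{p}\ge\frac{m_1+m_2+m_1m_2}{2}$. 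Rearranging gives $p\le\frac{2(m_1+m_2)+8}{m_1+m_2+m_1m_2}=2^*(m_1,m_2)$, and together with the trivial requirement $p\ge1$ needed for $\|\cdot\|_{L^p(\Omega)}$ to be a norm this is exactly the claim.

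The routine parts are the two change–of–variables identities, which amount to bookkeeping of powers of $\lambda$ with the cancellation between the two components of $\nabla_{m_1,m_2}$ forced by the dilation weights. The one genuinely delicate point is the support condition $T_\lambda\varphi\in C^\infty_0(\Omega)$: because the sharp exponent is governed by concentration at the origin, which in a typical Tricomi domain lies on $\partial\Omega$ rather than in the interior, one cannot simply dilate an arbitrary bump as in the $\mathbb{R}^n$ case, and it is precisely the star–shapedness of $\Omega$ with respect to the flow of $D$ that keeps the dilated supports inside $\Omega$ for all large $\lambda$.
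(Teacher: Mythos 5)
Your proof is correct and is essentially the paper's own argument: the same anisotropic rescaling $u\mapsto u(\lambda^{m_1+2}x,\lambda^{m_2+2}y)$ with the same two scaling identities for the $L^p$-norm and the weighted gradient norm, only parametrized so that $\lambda\to+\infty$ plays the role of the paper's $\lambda\to0^+$ for $u_\lambda=u\circ\phi_\lambda$. The one genuine difference is that you explicitly address the support condition $T_\lambda\varphi\in C^\infty_0(\Omega)$ via $D$-star-shapedness (or a flow-invariant choice of support), a point the paper's proof passes over in silence and which is indeed needed, since for a domain kept away from the coordinate axes the assumed inequality holds for every finite $p$ and the dilated functions would otherwise leave $C^\infty_0(\Omega)$.
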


In the following results, we consider $\Omega_1$ to be an open, bounded, simply connected set with a piecewise $C^1$ boundary, $\partial \Omega_1 = \sigma_1 \cup AC \cup BC$, formed by an arc $\sigma_1 \subset {(x,y) \in \mathbb{R}^2; \ y > 0}$, which intercepts the x-axis at points $A = (2x_0, 0)$ and $B = (0, 0)$, where $x_0 < 0$, and by the characteristic curves $AC$ and $BC$ in ${(x,y) \in \mathbb{R}^2; \ x \leq 0 \ \text{and} \ y \leq 0}$ of the operator $\mathcal{O}$, passing through points $A$ and $B$, respectively, and meeting at point $C$. We say that $\Omega_1$ is a Tricomi domain for the operator $\mathcal{O}$.

\begin{theorem}
Let $\Omega_1$ be a Tricomi domain for the operator $\mathcal{O}$ with $m_1$, $\frac{m_2}{2}$ odd and even numbers respectively, whose boundary $\partial\Omega_1$ is piecewise $C^1$ with the exterior unit normal vector $\eta$. If $u\in C^2(\overline{\Omega_1})$ is a solution of the problem \eqref{P1}, then 

\begin{align*}
    (m_1+m_2+4)\int_{\Omega_1} F(u)-\frac{m_1+m_2+m_1m_2}{2}\int_{\Omega_1} uf(u)= \nonumber\frac{1}{2}\left[\int_{BC\cup\sigma_1}\omega_1+\int_{BC}\omega_2\right],
\end{align*}
where $F$ is the primitive function of $f\in C^0(\mathbb{R})$ such that $F(0)=0$, $\omega_1$ and $\omega_2$ are defined by 

\begin{align*}
    \omega_1=\Bigl[2Du(-y^{m_1}u_x,-x^{m_2}u_y) +(y^{m_1}u^2_x+x^{m_2}u^2_y)(-(m_1+2)x,-(m_2+2)y\Bigl]\cdot\eta \, ,
\end{align*}
\begin{align*}
    \omega_2=\Bigl[-2F(u)(-(m_1+2)x,-(m_2+2)y)
    -(m_1+m_2+m_1m_2)u (-y^{m_1}u_x,-x^{m_2}u_y)\Bigl]\cdot\eta \, ,
\end{align*}
and $D$ is the vector field
\begin{align*}
    D=-(m_1+2)x\partial_x-(m_2+2)y\partial_y.
\end{align*}
\end{theorem}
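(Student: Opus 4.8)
The plan is to imitate the classical Pohožaev argument: multiply the equation $\mathcal{O}(u)=f(u)$ by the field $Du=-(m_1+2)xu_x-(m_2+2)yu_y$, integrate over $\Omega_1$, and rewrite every resulting term as a divergence so that the interior contributions collapse to bulk integrals of $F(u)$ and $uf(u)$ while the remaining terms become boundary integrals over $\partial\Omega_1=\sigma_1\cup AC\cup BC$. The specific coefficients $m_1+2$ and $m_2+2$ are dictated by the requirement that $\mathcal{X}u=(-y^{m_1}u_x,-x^{m_2}u_y)$ transforms homogeneously under the dilation generated by $D$; this is exactly the nonhomogeneous dilation invariance mentioned in the introduction, and it is what makes the quadratic gradient terms assemble into a single divergence $\operatorname{div}\!\big[(y^{m_1}u_x^2+x^{m_2}u_y^2)(-(m_1+2)x,-(m_2+2)y)\big]$ up to a multiple of $y^{m_1}u_x^2+x^{m_2}u_y^2$ itself.

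First I would handle the right-hand side: $\int_{\Omega_1}f(u)\,Du=\int_{\Omega_1}D(F(u))=\int_{\Omega_1}\operatorname{div}\big(F(u)(-(m_1+2)x,-(m_2+2)y)\big)-(m_1+m_2+4)\int_{\Omega_1}F(u)$, and the divergence term produces, via the divergence theorem, a boundary integral; on $\sigma_1$ one uses $u=0$ (hence $F(u)=0$) so that piece survives only on $BC$ — this is the origin of $\omega_2$'s first summand. Next, for the left-hand side I would write $\big(\operatorname{div}\mathcal{X}u\big)Du$ and integrate by parts once to move a derivative off $\mathcal{X}u$, generating the term $\mathcal{X}u\cdot\nabla(Du)$ together with a boundary term $\big(Du\,\mathcal{X}u\big)\cdot\eta$; expanding $\nabla(Du)$ and using $x^{m_2}\partial_x(\cdot)$, $y^{m_1}\partial_y(\cdot)$ bookkeeping, the interior term splits into a multiple of the energy density $y^{m_1}u_x^2+x^{m_2}u_y^2$ plus an exact divergence of $(y^{m_1}u_x^2+x^{m_2}u_y^2)(-(m_1+2)x,-(m_2+2)y)$. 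The energy-density integral $\int_{\Omega_1}(y^{m_1}u_x^2+x^{m_2}u_y^2)$ is then eliminated by testing the equation against $u$ itself: $\int_{\Omega_1}(y^{m_1}u_x^2+x^{m_2}u_y^2)=\int_{\Omega_1}uf(u)+\text{(boundary terms)}$, where again $u=0$ on $\sigma_1\cup AC\cup BC$ wait — one must be careful that $u$ vanishes only on $\Gamma=\sigma_1$, so here the boundary term is $-\int_{\partial\Omega_1}u\,\mathcal{X}u\cdot\eta=-\int_{BC\cup AC}u\,\mathcal{X}u\cdot\eta$; combining this with the appropriate multiple gives the $-\tfrac{m_1+m_2+m_1m_2}{2}\int uf(u)$ coefficient and the second summand of $\omega_2$. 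Collecting constants from the two integrations by parts yields the global coefficient $(m_1+m_2+4)$ on $\int F(u)$.

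The parity hypotheses ($m_1$ odd, $m_2/2$ even, so $m_2$ divisible by $4$) are exactly what is needed to (i) make $x^{m_2}\ge 0$ and control signs of the characteristic-curve contributions, and (ii) ensure the characteristic pieces $AC$ (and the relevant part of $BC$) contribute in the claimed way — on the characteristics of $\mathcal{O}$ the form $y^{m_1}u_x^2+x^{m_2}u_y^2$ degenerates against $\eta$ in a sign-definite manner, so several boundary integrals over $AC$ drop out or combine, leaving only $\int_{BC\cup\sigma_1}\omega_1+\int_{BC}\omega_2$. The main obstacle I anticipate is precisely this boundary bookkeeping on the two characteristic arcs: one has to write the characteristics explicitly (they solve $y^{m_1}\,dy^2 = x^{m_2}\,dx^2$ type relations adapted to $\mathcal{O}$), compute $\eta$ along them, and verify that the quadratic-gradient boundary form restricted to a characteristic is a perfect square (or vanishes), so that the $AC$-integrals can be discarded and only the stated $BC\cup\sigma_1$ and $BC$ integrals remain; keeping the weights $y^{m_1}$, $x^{m_2}$ straight through these restrictions, with the stated parities guaranteeing the weights are nonnegative where needed, is the delicate point. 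The interior computations, by contrast, are routine once the two integrations by parts are set up correctly.
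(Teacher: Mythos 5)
Your overall strategy is the same as the paper's: multiply the equation by $Du$, integrate by parts twice so that the quadratic gradient terms assemble into $\tfrac12\operatorname{div}\bigl((y^{m_1}u_x^2+x^{m_2}u_y^2)(-(m_1+2)x,-(m_2+2)y)\bigr)$ plus $\tfrac{m_1+m_2+m_1m_2}{2}(y^{m_1}u_x^2+x^{m_2}u_y^2)$, treat $f(u)Du=D(F(u))$ by one more divergence, and eliminate the energy integral by testing the equation against $u$; that is exactly Steps 1--4 of the paper's proof. (Incidentally, your displayed sign is off: $D(F(u))=\operatorname{div}\bigl(F(u)(-(m_1+2)x,-(m_2+2)y)\bigr)+(m_1+m_2+4)F(u)$, with a plus sign, which is what produces the coefficient $(m_1+m_2+4)$.)

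There is, however, a genuine gap at precisely the point you flag as delicate: the arc $AC$. First, you misstate the boundary condition: in \eqref{P1} the Dirichlet datum is $u=0$ on $AC\cup\sigma_1$, not only on $\sigma_1$; with your reading, the $u$-multiplier identity retains a term $\int_{AC}u\,\mathcal{X}u\cdot\eta$ (with $\mathcal{X}u=(-y^{m_1}u_x,-x^{m_2}u_y)$) that never disappears, so the stated identity, whose boundary integrals involve only $BC\cup\sigma_1$ and $BC$, cannot follow. Second, your proposed mechanism for discarding the $AC$ contributions of the $\omega_1$-type boundary form --- that on a characteristic the form ``degenerates against $\eta$ in a sign-definite manner'' or ``is a perfect square'' --- is not correct and cannot work: for $m_1$ odd, $m_2$ even and $y\le0$ one has $y^{m_1}u_x^2+x^{m_2}u_y^2=x^{m_2}\,(\partial_+u)(\partial_-u)$, where $\partial_\pm u=x^{-m_2/2}\bigl[x^{m_2/2}u_y\pm(-y)^{m_1/2}u_x\bigr]$ are the two characteristic directional derivatives, a product with no sign in general; and if the characteristic structure alone killed such terms, the same argument would annihilate the $BC$ integrals, contradicting the presence of $\int_{BC}\omega_1$ in the identity. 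The paper's actual argument is that $u\equiv0$ on $AC$ forces the tangential derivative $\partial_-u$ to vanish along $AC$, whence $(y^{m_1}u_x^2+x^{m_2}u_y^2)_{|AC}\equiv0$ and $(\mathcal{X}u\cdot\eta)_{|AC}\equiv0$ (since $\mathcal{X}u\cdot\eta_{AC}=(-y)^{m_1/2}x^{m_2/2}\partial_-u$); together with $u=0$, $F(0)=0$ on $AC\cup\sigma_1$ this is what removes every $AC$ term. Without invoking the Dirichlet condition on $AC$, your proof cannot close.
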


\begin{theorem}
Let $\Omega_1$ be a Tricomi domain for the operator $\mathcal{O}$, which is $D$-star-shaped where $D=-(m_1+2)x\partial_x-(m_2+2)y\partial_y$ with $m_1$, $\frac{m_2}{2}$ odd and even respectively. Let $u\in C^2(\overline{\Omega_1})$ be a regular solution of
\begin{equation*}
 \begin{cases}
\mathcal{O}(u):=-y^{m_1}u_{xx}-x^{m_2}u_{yy}=u|u|^{\alpha-1} &\mbox{in} \quad\Omega_1, 
\\ \quad \ \ \;u=0 & \mbox{on} \quad AC\cup\sigma_1\subseteq\partial\Omega_1,
\end{cases}
\end{equation*}
with $\alpha>2^*(m_1,m_2)-1=\frac{m_1+m_2-m_1m_2+8}{m_1+m_2+m_1m_2}$. Then $u\equiv0$ a.e. in $\Omega_1$.
\end{theorem}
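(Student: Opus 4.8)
The plan is to derive the nonexistence result as a direct consequence of the Pohožaev-type identity (the preceding Theorem) combined with a suitable Hardy–Sobolev-type inequality and the $D$-star-shapedness hypothesis. First I would specialize the identity to the nonlinearity $f(u) = u|u|^{\alpha - 1}$, so that $F(u) = \frac{1}{\alpha+1}|u|^{\alpha+1}$ and $u f(u) = |u|^{\alpha+1}$. Substituting into the left-hand side of the identity yields
\begin{equation*}
\left[\frac{m_1+m_2+4}{\alpha+1} - \frac{m_1+m_2+m_1 m_2}{2}\right]\int_{\Omega_1} |u|^{\alpha+1}
= \frac{1}{2}\left[\int_{BC\cup\sigma_1}\omega_1 + \int_{BC}\omega_2\right].
\end{equation*}
A short computation shows that the bracketed coefficient on the left is negative precisely when $\alpha + 1 > \frac{2(m_1+m_2+4)}{m_1+m_2+m_1 m_2}$, i.e. exactly when $\alpha > 2^*(m_1,m_2) - 1 = \frac{m_1+m_2-m_1 m_2 + 8}{m_1+m_2+m_1 m_2}$, which is the standing hypothesis. (When $\alpha$ equals the critical value the coefficient vanishes and the argument degenerates to showing the boundary terms force $u \equiv 0$; I would treat the strict-supercritical case as the main case.)

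Next I would analyze the boundary terms. On the arc $\sigma_1$ and on the characteristic $AC$ the Dirichlet condition $u = 0$ applies, so all terms containing $u$ itself vanish there; this kills $\omega_2$ on $\sigma_1$ (and $\omega_2$ is only integrated over $BC$ anyway) and reduces $\omega_1$ on $\sigma_1$ and on $AC$ to the term $(y^{m_1} u_x^2 + x^{m_2} u_y^2)(-(m_1+2)x, -(m_2+2)y)\cdot\eta$, since $Du$ is a tangential-plus-normal combination whose coefficient involves only the normal derivative once $u$ vanishes on the curve — more precisely $Du = D\cdot\nabla u$ reduces to a multiple of $\partial u/\partial\eta$ there. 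The key point is that $D$-star-shapedness of $\Omega_1$ means $D\cdot\eta \geq 0$ on $\partial\Omega_1$ (appropriately signed with the chosen orientation of $\eta$), which makes this reduced quadratic boundary integrand have a definite sign. On the characteristic curves $AC$ and $BC$ one uses that they are characteristic for $\mathcal{O}$, so the quadratic form $y^{m_1} u_x^2 + x^{m_2} u_y^2$ interacts with $\eta$ in a degenerate way; I would invoke the characteristic relation (as in \cite{lupopayne,lupopaynepopivanov}) to show the $BC$-contributions of $\omega_1$ and $\omega_2$ combine to something sign-definite, using here that $u = 0$ on $AC$ but not necessarily on $BC$, which is why $\omega_2$ appears only on $BC$ and why the extra Hardy–Sobolev inequality is needed to control the non-Dirichlet part on $BC$.

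Assembling these observations: the left-hand side is $\leq 0$ (strictly, unless $u\equiv 0$) while the right-hand side is $\geq 0$ by star-shapedness and the characteristic computations, forcing $\int_{\Omega_1}|u|^{\alpha+1} = 0$, hence $u \equiv 0$ a.e. The parity hypotheses ($m_1$ odd, $m_2/2$ even) are what guarantee that the weights $y^{m_1}$, $x^{m_2}$ and the coefficients appearing in $\omega_1, \omega_2$ carry the right signs on the relevant sub-arcs lying in the quadrant $\{x \le 0, y \le 0\}$, so I would be careful to track those sign conditions explicitly. The main obstacle I anticipate is the boundary analysis on the characteristic piece $BC$, where the Dirichlet condition is not imposed: there one cannot simply drop the $u$-terms, and the degeneracy of the operator along characteristics makes the normal-derivative estimates delicate. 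This is precisely where the Hardy–Sobolev inequality enters, to absorb the boundary integral involving $u$ on $BC$ into a controllable multiple of the bulk term, closing the argument.
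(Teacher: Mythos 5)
Your overall strategy coincides with the paper's: specialize the Pohožaev identity to $f(u)=u|u|^{\alpha-1}$, observe that the coefficient $\frac{m_1+m_2+4}{\alpha+1}-\frac{m_1+m_2+m_1m_2}{2}$ is strictly negative exactly when $\alpha>2^*(m_1,m_2)-1$, and then show the boundary terms on the right are nonnegative. Your treatment of $\sigma_1$ is also the paper's: $u=0$ there kills the tangential derivative, leaving the quadratic form $y^{m_1}u_x^2+x^{m_2}u_y^2$ (nonnegative on $\sigma_1$ because $y>0$ and $m_2$ is even) multiplied by $(m_1+2)x\,dy-(m_2+2)y\,dx\ge0$, which is the $D$-starlike condition.

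The gap is in the step you yourself flag as the main obstacle: the contribution of $BC$. Your proposal to ``absorb the boundary integral involving $u$ on $BC$ into a controllable multiple of the bulk term'' is not the right mechanism and would not close the argument as stated, since the bulk term $\int_{\Omega_1}|u|^{\alpha+1}$ already carries a negative coefficient; transferring part of the boundary term to it cannot produce the sign contradiction unless the $BC$ piece is itself shown nonnegative. That is what the paper does, by a purely one-dimensional argument along $BC$: because the flow of $D$ is tangent to the characteristic $BC$, one has $Du=(m_2+2)(-y)\partial_+u$ there, where $\partial_+u$ is the directional derivative along $BC$; parameterizing $BC$ by $\Gamma(t)$, $t\in[y_c,0]$, and setting $\phi=u\circ\Gamma$, the combined integrand of $\omega_1+\omega_2$ becomes a weighted quadratic expression in $\phi$ and $\phi'$. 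Crucially $\phi(y_c)=\phi(0)=0$ (the endpoints $C$ and $B$ lie on $AC$ and on $\overline{\sigma_1}$, where $u=0$), so the cross term $\phi\phi'$ can be integrated by parts, and the resulting inequality to be verified, comparing $\int_{y_c}^0(-t)^{\frac{m_1+m_1m_2-2}{m_2+2}}\phi^2\,dt$ against $\int_{y_c}^0(-t)^{\frac{m_1+2m_2+m_1m_2+2}{m_2+2}}(\phi')^2\,dt$ with constant $\frac{2(m_2+2)}{m_1+m_2+m_1m_2}$, is exactly a one-dimensional weighted Hardy--Sobolev inequality whose best-constant criterion ($M_L=\sup G_L=\frac{m_2+2}{m_1+m_2+m_1m_2}$, with $r(2,2)=2$) is checked explicitly. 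Without the endpoint vanishing of $\phi$, the integration by parts, and this constant verification, the sign of $\int_{BC}(\omega_1+\omega_2)$ --- and hence the whole nonexistence conclusion --- is not established.
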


Other similar theorems involving different domains can also be found throughout this article; this brings with it challenges, particularly in the necessary estimates arising from the domain's geometry, which had to be overcome in the proofs through a refinement of the applied technique.


\section{Statement of the results}\label{Sec2}

In this section, we will study the invariance of the solutions of the homogeneous equation of problem \eqref{PP} determined by a family of nonhomogeneous dilations that determines an infinitesimal generator $D$. We will define $D$-star-shaped domains and the weighted Sobolev space associated to problem \eqref{PP} and we conclude with a critical exponent phenomenon of Sobolev-Gagliardo-Nirenberg type over $D$-star-shaped domains.

\vspace{0,3cm}

Define $\phi_\lambda(x,y)=(\lambda^{-\alpha}x,\lambda^{-\beta}y)$, for $\alpha,\beta >0$ with $\lambda>0$, a family of nonhomogeneous dilations (i.e. $\alpha\neq\beta$) that generates $\psi_\lambda(u)=u\circ\phi_\lambda=u_\lambda$, a family of nonhomogeneous dilations operators and its infinitesimal generator

\begin{equation}
    \left[\frac{d}{d\lambda}u_\lambda\right]_{\mid _{\lambda=1}}=Du=-\alpha x u_x-\beta y u_y.
\end{equation}

The flow $\mathcal{F}_t:\mathbb{R}^2\rightarrow\mathbb{R}^2$ defined by $\mathcal{F}_t(x_0,y_0)=(x(t),y(t))$, for each $t\in\mathbb{R}$, being the unique integral curve of $D=-\alpha x\partial_x-\beta y\partial_y=(-\alpha x,-\beta y)\cdot\nabla$ passing through the point $(x_0,y_0)$ in time $t=0$. Note that $(x(t),y(t))=(x_0e^{-\alpha t},y_0e^{-\beta t})$, because, $\gamma'(t)=(x'(t),y'(t))=(-\alpha x(t),-\beta y(t))=D(\gamma(t))$. Hence $x'(t)=-\alpha x(t)$ and $y'(t)=-\beta y(t)$, this is, $$\frac{d}{dt}x(t)=-\alpha x(t) \ \ \text{and}  \ \ \frac{d}{dt}y(t)=-\beta y(t).$$ Therefore, $x(t)=C_1e^{-\alpha t}$ and $y(t)=C_2e^{-\beta t}$ replacing $t=0$, we found $x(0)=x_0=C_1$ and $y(0)=y_0=C_2$, so $$(x(t),y(t))=(x_0e^{-\alpha t},y_0e^{-\beta t}).$$

Furthermore, 
\begin{align*}
    \mathcal{F_{+\infty}}(x_0,y_0)&=\lim_{t\rightarrow+\infty}\mathcal{F}_t(x_0,y_0)=\lim_{t\rightarrow+\infty}(x(t),y(t)) \\
    &=\lim_{t\rightarrow+\infty}(x_0e^{-\alpha t},y_0e^{-\beta t})=(0,0),
\end{align*}
for each starting point $(x_0,y_0)\in\mathbb{R}^2$. In conclusion, $\mathbb{R}^2$ is star-shaped with respect to the origin using the field $D$. We said $\mathbb{R}^2$ is $D$-star-shaped in the sense of the following definition.

\begin{definition}
Let $\alpha,\beta>0$. A open set $\Omega\subset\mathbb{R}^2$ is said $D$-star-shaped (or star-shaped with respect to the field $D=-\alpha x\partial_x-\beta y\partial_y$) if for each $(x_0,y_0)\in\overline{\Omega}$ one has to $\mathcal{F}_t(x_0,y_0)\subset\overline{\Omega}$ for each $t\in [0,+\infty]$.
\end{definition}

Given $D$-star-shaped property in $\Omega$, the $D$-starlike property on $\partial\Omega$ and the relationship between them mentioned in the following lemma can be found in \cite{lupopayne}.

\begin{lemma}
Let $\Omega$ be an open set with piecewise $C^1$ boundary $\partial\Omega$. If $\Omega$ is $D$-star-shaped where $D=-\alpha x\partial_x-\beta y\partial_y$ with $\alpha, \beta>0$,
then $\partial\Omega$ is starlike, that is,
\begin{equation}\label{SL}
    (\alpha x,\beta y)\cdot\eta(x,y) \geq0,
\end{equation}
for each regular point $(x,y)\in\partial\Omega$ where $\eta(x,y)$ is the unit exterior normal vector on $\partial\Omega$ at the point $(x,y)$. 
\end{lemma}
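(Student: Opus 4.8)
The plan is to deduce the starlike inequality \eqref{SL} from the $D$-star-shaped hypothesis by a straightforward first-order (infinitesimal) argument at a regular boundary point. Fix a regular point $p=(x,y)\in\partial\Omega$, where $\partial\Omega$ has a well-defined outward unit normal $\eta=\eta(p)$ and a tangent line. Since $\Omega$ is $D$-star-shaped, the integral curve $t\mapsto\mathcal{F}_t(p)=(xe^{-\alpha t},ye^{-\beta t})$ stays in $\overline{\Omega}$ for all $t\in[0,+\infty]$. In particular, starting from $p\in\partial\Omega\subset\overline{\Omega}$, the forward orbit does not leave $\overline{\Omega}$, so locally near $t=0^+$ the curve moves into $\overline{\Omega}$; this should force the velocity vector at $t=0$, namely $D(p)=(-\alpha x,-\beta y)$, to point into $\overline{\Omega}$ (or along the tangent), i.e. to make a nonacute angle with the outward normal: $D(p)\cdot\eta(p)\le 0$. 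Multiplying by $-1$ gives exactly $(\alpha x,\beta y)\cdot\eta(x,y)\ge 0$.

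In more detail, the key steps in order are: (1) Recall/record the explicit flow $\mathcal{F}_t(p)=(xe^{-\alpha t},ye^{-\beta t})$ and that $\tfrac{d}{dt}\mathcal{F}_t(p)\big|_{t=0}=D(p)$, already derived in the excerpt. (2) At the regular point $p$, choose a $C^1$ local representation of $\partial\Omega$, say as a level set $\{g=0\}$ of a $C^1$ function $g$ with $\nabla g(p)\ne 0$ and $\nabla g(p)$ a positive multiple of $\eta(p)$, with $\{g<0\}$ corresponding to the interior side. (3) Consider $h(t):=g(\mathcal{F}_t(p))$ for small $t\ge 0$; then $h(0)=0$ and, because $\mathcal{F}_t(p)\in\overline{\Omega}$ forces $h(t)\le 0$ for $t\ge 0$ small, we get $h'(0^+)\le 0$. (4) Compute $h'(0)=\nabla g(p)\cdot D(p)$ by the chain rule, conclude $\nabla g(p)\cdot D(p)\le 0$, hence $\eta(p)\cdot D(p)\le 0$, and rewrite as \eqref{SL}.

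The main obstacle, and the point that needs care rather than cleverness, is step (3): justifying that the forward orbit actually stays on the closed side of the boundary near $t=0$, and handling the regularity bookkeeping at a merely piecewise-$C^1$ boundary. Two subtleties arise. First, $\overline{\Omega}$ being preserved by $\mathcal{F}_t$ means $\mathcal{F}_t(p)\in\overline{\Omega}$, and near a regular point $\overline{\Omega}$ locally coincides with $\{g\le 0\}$, so $h(t)\le 0$ does follow — but one must note this uses that $p$ is a regular point (so the local picture is a genuine $C^1$ hypersurface with a well-defined interior side), which is exactly the hypothesis under which \eqref{SL} is claimed. Second, one should observe the degenerate case $D(p)=0$, i.e. $(x,y)=(0,0)\in\partial\Omega$: there the inequality is trivially an equality $0\ge 0$, so no separate argument is needed. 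I would also remark that since this is attributed to \cite{lupopayne} with $D$ there of the same structural form $-\alpha x\partial_x-\beta y\partial_y$, the proof transfers verbatim and one may simply cite it; the argument above is the natural self-contained version.
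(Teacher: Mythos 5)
Your argument is correct: the first-order computation $h'(0)=\nabla g(p)\cdot D(p)\le 0$ from $h(t)=g(\mathcal{F}_t(p))\le 0=h(0)$ for small $t\ge 0$, together with the remarks on the regular-point hypothesis and the trivial case $D(p)=0$, gives the inequality \eqref{SL}. The paper itself offers no proof of this lemma, simply deferring to the cited reference of Lupo and Payne, and your infinitesimal argument is the standard one given there (stated there essentially in contrapositive form: if $(\alpha x,\beta y)\cdot\eta<0$ at a regular point, the forward flow immediately leaves $\overline{\Omega}$), so your self-contained version is the natural expansion of the paper's citation.
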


\begin{definition}\label{OPIE}
A subset $\Gamma\subset\partial\Omega$ is said $D$-starlike if the condition \eqref{SL} holds on $\Gamma$, which is equivalent to the condition 
\begin{equation}\label{DSL}
    \alpha x dy- \beta y dx \geq0 \ \ \mbox{on} \ \ \Gamma,
\end{equation}
where $\partial\Omega$ is given by the positive orientation leaving the interior of $\Omega$ on the left side.
\end{definition}

We want to study the behavior of regular solutions of the homogeneous problem associated with the operator 
\begin{equation*}
    \mathcal{O}=-y^{m_1}\partial^2_x-x^{m_2}\partial^2_y \ \mbox{where} \ m_1, m_2\in\mathbb{N},
\end{equation*}
with respect to certain nonhomogeneous dilations on bounded domains. Without loss of generality, for bounded domain $\Omega$ of $\mathbb{R}^2$ we will consider the family of nonhomogeneous dilations 

\begin{equation}
    \phi_\lambda(x,y)=(\lambda^{-m_1-2}x,\lambda^{-m_2-2}y), \ \mbox{with} \ \lambda\in(0,1].
\end{equation}

 If $u$ is a regular solution of homogeneus problem $\mathcal{O}(u)=0$ in $\Omega$, then $u_\lambda=u\circ\phi_\lambda$ is a regular solution of homogeneus problem too. This is, $\mathcal{O}(u_\lambda)=0$. Since, $u_\lambda(x,y)=u(\lambda^{-m_1-2}x,\lambda^{-m_2-2}y)$, we obtain, 
 
 $$\frac{\partial}{\partial x}u_\lambda(x,y)=\lambda^{-m_1-2}u_x(\lambda^{-m_1-2}x,\lambda^{-m_2-2}y)$$
 and
 $$\frac{\partial}{\partial y}u_\lambda(x,y)=\lambda^{-m_2-2}u_y(\lambda^{-m_1-2}x,\lambda^{-m_2-2}y).$$
 
 Moreover, 
 $$\frac{\partial^2}{{\partial x}^2}u_\lambda(x,y)=\lambda^{-2m_1-4}u_{xx}(\lambda^{-m_1-2}x,\lambda^{-m_2-2}y)$$
 and
 $$\frac{\partial^2}{{\partial y}^2}u_\lambda(x,y)=\lambda^{-2m_2-4}u_{yy}(\lambda^{-m_1-2}x,\lambda^{-m_2-2}y).$$

Then, 
\begin{align*}
    \mathcal{O}(u_\lambda(x,y))=&-y^{m_1}\lambda^{-2m_1-4}u_{xx}(\lambda^{-m_1-2}x,\lambda^{-m_2-2}y)\\ &-x^{m_2}\lambda^{-2m_2-4}u_{yy}(\lambda^{-m_1-2}x,\lambda^{-m_2-2}y) \\
    =&\frac{\lambda^{-4}}{\lambda^{-m_1m_2}}[-y^{m_1}\lambda^{-2m_1-m_1m_2}u_{xx}(\lambda^{-m_1-2}x,\lambda^{-m_2-2}y) \\ 
    &-x^{m_2}\lambda^{-2m_2-m_1m_2}u_{yy}(\lambda^{-m_1-2}x,\lambda^{-m_2-2}y)].
\end{align*}

The change of variables $\overline{x}=\lambda^{-m_1-2}x$ and $\overline{y}=\lambda^{-m_2-2}y$ give us $$\overline{x}^{m_2}=\lambda^{-m_1m_2-2m_2}x^{m_2} \ \ \mbox{and} \ \ \overline{y}^{m_1}=\lambda^{-m_1m_2-2m_1}y^{m_1}.$$ 

By hypotheses, $\mathcal{O}(u(\overline{x},\overline{y}))=-\overline{y}^{m_1}u_{xx}(\overline{x},\overline{y})-\overline{x}^{m_2}u_{yy}(\overline{x},\overline{y})=0$ in $\Omega$. Therefore, $$\mathcal{O}(u_\lambda(x,y))=\frac{\lambda^{-4}}{\lambda^{-m_1m_2}}\mathcal{O}(u(\overline{x},\overline{y}))=0.$$

Consider the following weighted Sobolev space $H^1_{m_1,m_2}(\Omega)$, associated with the problem \eqref{PP}, as the completion of $C^\infty(\Omega)$ with respect to the norm 
\begin{equation}\label{N1}
    ||u||_{H^1_{m_1,m_2}(\Omega)}:=\left[\int_\Omega|y|^{m_1}u_x^2+|x|^{m_2}u_y^2+u^2\right]^{\frac{1}{2}}
\end{equation}
and the subset $H^1_{m_1,m_2,0}(\Omega)=\overline{C^\infty_0(\Omega)}$ with the norm of $H^1_{m_1,m_2} (\Omega)$ above.

\vspace{0,3cm}

We will prove that actually \eqref{N1} defines a norm for the spaces $H^1_{m_1,m_2}(\Omega)$. First, we define the weighted gradient  $$\nabla_{m_1,m_2}u:=\left(|y|^{\frac{m_1}{2}}u_x,|x|^{\frac{m_2}{2}}u_y\right).$$ 

So $$||\nabla_{m_1,m_2}u||^2_{L^2(\Omega)}=\int_\Omega\left(|y|^{m_1}u_x^2+|x|^{m_2}u_y^2\right)$$ and with this we rewrite $||\cdot||_{H^1_{m_1,m_2}(\Omega)}$ by 
\begin{align}\label{N2}
    ||u||^2_{H^1_{m_1,m_2}(\Omega)}=||\nabla_{m_1,m_2}u||^2_{L^2(\Omega)}+||u||^2_{L^2(\Omega)}.
\end{align}           

In fact, If $||u||_{H^1_{m_1,m_2}(\Omega)}=0$ then, $||\nabla_{m_1,m_2}u||^2_{L^2(\Omega)}+||u||^2_{L^2(\Omega)}=0$ so, $u=0$ a.e. in $\Omega$. Moreover, if $u=0$ a.e. in $\Omega$ then, $||u||^2_{L^2(\Omega)}=0$ and $u_x=0$ and $u_y=0$ a.e. in $\Omega$. So, $\nabla_{m_1,m_2}u=(0,0)$. Therefore, $||u||_{H^1_{m_1,m_2}(\Omega)}=0$. Note that, 
\begin{align*}
    |\lambda|\; ||u||_{H^1_{m_1,m_2}(\Omega)}&=|\lambda|\; ||\nabla_{m_1,m_2}u||_{L^2(\Omega)}+|\lambda|\;||u||_{L^2(\Omega)} \\
    &=||\nabla_{m_1,m_2}(\lambda u)||_{L^2(\Omega)}+||\lambda u||_{L^2(\Omega)} \\
    &=||\lambda u||_{H^1_{m_1,m_2}(\Omega)},
\end{align*} 
for all $u\in H^1_{m_1,m_2}(\Omega)$ and for all $ \lambda\in\mathbb{R}$.

To finish, just prove $$||u+v||^2_{H^1_{m_1,m_2}(\Omega)}\leq\Bigl(||u||_{H^1_{m_1,m_2}(\Omega)}+||v||_{H^1_{m_1,m_2}(\Omega)}\Bigl)^2$$ for any $u,v\in H^1_{m_1,m_2}(\Omega)$.

Observe that, $\nabla_{m_1,m_2}(u+v)=\nabla_{m_1,m_2}u+\nabla_{m_1,m_2}v$. Hence,
\begin{align*}
    ||u+v||^2_{H^1_{m_1,m_2}(\Omega)}\leq& \Bigl(||\nabla_{m_1,m_2}u||_{L^2(\Omega)}+||\nabla_{m_1,m_2}v||_{L^2(\Omega)}\Bigl)^2+\Bigl(||u||_{L^2(\Omega)}+||v||_{L^2(\Omega)}\Bigl)^2 \\
    =&||u||^2_{H^1_{m_1,m_2}(\Omega)}+||v||^2_{H^1_{m_1,m_2}(\Omega)} \\ &+2||\nabla_{m_1,m_2}u||_{L^2(\Omega)}||\nabla_{m_1,m_2}v||_{L^2(\Omega)}+2||u||_{L^2(\Omega)}||v||_{L^2(\Omega)}.
\end{align*}

On the other hand, 
\begin{align*}
    \Bigl(||u||_{H^1_{m_1,m_2}(\Omega)}+||v||_{H^1_{m_1,m_2}(\Omega)}\Bigl)^2=&||u||^2_{H^1_{m_1,m_2}(\Omega)}+||v||^2_{H^1_{m_1,m_2}(\Omega)}\\ &+2||u||_{H^1_{m_1,m_2}(\Omega)}||v||_{H^1_{m_1,m_2}(\Omega)}.
\end{align*}

All that's left is to analyze, $$||\nabla_{m_1,m_2}u||_{L^2(\Omega)}||\nabla_{m_1,m_2}v||_{L^2(\Omega)}+||u||_{L^2(\Omega)}||v||_{L^2(\Omega)}\leq ||u||_{H^1_{m_1,m_2}(\Omega)}||v||_{H^1_{m_1,m_2}(\Omega)}.$$

In fact,
\begin{align*}
\Bigl(||\nabla_{m_1,m_2}u||_{L^2(\Omega)}&||\nabla_{m_1,m_2}v||_{L^2(\Omega)}+||u||_{L^2(\Omega)}||v||_{L^2(\Omega)}\Bigl)^2\\=
&||\nabla_{m_1,m_2}u||^2_{L^2(\Omega)}||\nabla_{m_1,m_2}v||^2_{L^2(\Omega)}\\
&+2||\nabla_{m_1,m_2}u||_{L^2(\Omega)}||\nabla_{m_1,m_2}v||_{L^2(\Omega)}||u||_{L^2(\Omega)}||v||_{L^2(\Omega)}\\
&+||u||^2_{L^2(\Omega)}||v||^2_{L^2(\Omega)},
\end{align*} 
and 
\begin{align*}
    ||u||^2_{H^1_{m_1,m_2}(\Omega)}&||v||^2_{H^1_{m_1,m_2}(\Omega)}\\
=&\bigl(||\nabla_{m_1,m_2}u||^2_{L^2(\Omega)}+||u||^2_{L^2(\Omega)}\bigl)\bigl(||\nabla_{m_1,m_2}v||^2_{L^2(\Omega)}+||v||^2_{L^2(\Omega)}\bigl) \\
    =&||\nabla_{m_1,m_2}u||^2_{L^2(\Omega)}||\nabla_{m_1,m_2}v||^2_{L^2(\Omega)}+||\nabla_{m_1,m_2}u||^2_{L^2(\Omega)}||v||^2_{L^2(\Omega)}\\ &+||u||^2_{L^2(\Omega)}||\nabla_{m_1,m_2}v||^2_{L^2(\Omega)}+||u||^2_{L^2(\Omega)}||v||^2_{L^2(\Omega)}.
\end{align*}

Pull all together, it follows that $||\cdot||_{H^1_{m_1,m_2}(\Omega)}$ is a norm in $H^1_{m_1,m_2}(\Omega)$. The next result deals with the behavior of critical exponent which provides an estimate for this, if there is an immersion of type Sobolev-Gagliardo-Nirenberg on $H^1_{m_1,m_2,0}(\Omega).$ Its proof follows using a similar idea made in \cite{lupopayne}.

\begin{proposition}\label{IMER}
Let $m_1,m_2\in\mathbb{N}$ given. Suppose there exists $C>0$ independent of $u$ such that
\begin{equation}\label{SGN}
    ||u||_{L^p(\Omega)}\leq C||\nabla_{m_1,m_2}u||_{L^2(\Omega)}, \ \ \forall \ u\in C^\infty_0(\Omega).
\end{equation}
Then, $1\leq p\leq 2^*(m_1,m_2):=\frac{2(m_1+m_2)+8}{m_1+m_2+m_1m_2}\cdot$
\end{proposition}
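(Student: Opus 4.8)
The plan is to run the classical scaling (dimensional analysis) argument adapted to the nonhomogeneous dilations $\phi_\lambda(x,y)=(\lambda^{-m_1-2}x,\lambda^{-m_2-2}y)$ that were introduced above. Fix a nonzero $u\in C^\infty_0(\Omega)$ and, for $\lambda\in(0,1]$, set $u_\lambda=u\circ\phi_\lambda$. First I would compute how each quantity appearing in \eqref{SGN} scales. From $u_\lambda(x,y)=u(\lambda^{-m_1-2}x,\lambda^{-m_2-2}y)$ and the change of variables $\overline x=\lambda^{-m_1-2}x$, $\overline y=\lambda^{-m_2-2}y$ (Jacobian $\lambda^{m_1+m_2+4}$), one gets
\begin{align*}
\|u_\lambda\|_{L^p(\Omega_\lambda)}^p &= \lambda^{m_1+m_2+4}\|u\|_{L^p(\Omega)}^p,
\end{align*}
where $\Omega_\lambda=\phi_\lambda^{-1}(\Omega)\supseteq\Omega$ for $\lambda\le 1$ (so $u_\lambda\in C^\infty_0(\Omega_\lambda)$; if one wants to stay inside a fixed $\Omega$ one instead takes $\lambda\ge 1$ and $u$ supported in a small ball — either way the bookkeeping is the same). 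For the weighted gradient, using $|\overline y|^{m_1}=\lambda^{-m_1m_2-2m_1}|y|^{m_1}$ and $\partial_x u_\lambda=\lambda^{-m_1-2}u_x(\overline x,\overline y)$, the $x$-component contributes a factor $\lambda^{-m_1m_2-2m_1}\cdot\lambda^{-2m_1-4}$ inside the integrand, and after the Jacobian $\lambda^{m_1+m_2+4}$ this gives
\begin{align*}
\int_{\Omega_\lambda}|y|^{m_1}(\partial_x u_\lambda)^2 &= \lambda^{m_1+m_2+4-m_1m_2-4m_1-4}\int_\Omega |y|^{m_1}u_x^2 = \lambda^{m_2-3m_1-m_1m_2}\int_\Omega|y|^{m_1}u_x^2,
\end{align*}
and symmetrically the $y$-component scales with the exponent $m_1-3m_2-m_1m_2$. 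The key (and slightly delicate) point is that these two exponents are in general \emph{different}, so $\|\nabla_{m_1,m_2}u_\lambda\|_{L^2}^2$ is not a pure power of $\lambda$; I would handle this by observing that both exponents are bounded below by $s:=\min\{m_2-3m_1-m_1m_2,\ m_1-3m_2-m_1m_2\}$, hence $\|\nabla_{m_1,m_2}u_\lambda\|_{L^2(\Omega_\lambda)}^2\le \lambda^{s}\|\nabla_{m_1,m_2}u\|_{L^2(\Omega)}^2$ for $\lambda\le 1$, with $s$ chosen to make the inequality go the right way. (One must be careful which of the two regimes $\lambda\to 0$ or $\lambda\to\infty$ is used; the inequality \eqref{SGN} applied to $u_\lambda$ must survive in the limit, which dictates the sign conventions below.)

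Next I would substitute $u_\lambda$ into \eqref{SGN}, which holds for all $u_\lambda\in C^\infty_0$ with the \emph{same} constant $C$. Raising to appropriate powers, this reads
\begin{align*}
\lambda^{\frac{m_1+m_2+4}{p}}\|u\|_{L^p(\Omega)} \le C\,\lambda^{s/2}\|\nabla_{m_1,m_2}u\|_{L^2(\Omega)}
\end{align*}
(in the regime where the gradient estimate degrades in our favour). Dividing and letting $\lambda$ tend to the appropriate endpoint ($0$ or $\infty$), a nontrivial inequality $0<\|u\|_{L^p}\le C\|\nabla u\|_{L^2}$ can persist only if the exponent of $\lambda$ on the left is controlled by that on the right, i.e. $\frac{m_1+m_2+4}{p}\ge \frac{s}{2}$ does not blow up — forcing $\frac{m_1+m_2+4}{p}\le \frac{1}{2}\cdot(m_1+m_2+m_1m_2)$ after tracking the exact critical exponent. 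Solving the borderline equality
\begin{align*}
\frac{m_1+m_2+4}{p} = \frac{m_1+m_2+m_1m_2}{2}
\end{align*}
gives exactly $p=2^*(m_1,m_2)=\dfrac{2(m_1+m_2)+8}{m_1+m_2+m_1m_2}$, and the one-sided estimate shows that $p>2^*(m_1,m_2)$ is impossible; together with the trivial requirement $p\ge 1$ from the definition of $L^p$, this is the claim.

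The main obstacle — and the place where this differs from the homogeneous Sobolev case and even from the Gellerstedt case in \cite{lupopayne} — is precisely the lack of a single scaling exponent for $\|\nabla_{m_1,m_2}u_\lambda\|_{L^2}^2$, caused by the two weights $|y|^{m_1}$ and $|x|^{m_2}$ scaling with different powers of $\lambda$. I expect to spend most of the care there: I must argue that the correct exponent to use is the one coming from the ``worse'' weight (so that the bound on $\|\nabla_{m_1,m_2}u_\lambda\|_{L^2}$ still implies a contradiction), and verify that the critical value produced this way is genuinely $2^*(m_1,m_2)$ rather than something governed only by $\max$ or $\min$ of $m_1,m_2$. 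A clean way to sidestep the asymmetry is to choose the test function $u$ adapted to the dilation — e.g. supported in a region where $|x|$ and $|y|$ are comparable in the relevant scaling — so that both gradient components scale with the same effective power; I would present the argument in that form, since then every quantity is an honest power of $\lambda$ and the endpoint computation is immediate. The remaining steps (the change-of-variables computations and the limit $\lambda\to 0^+$) are routine.
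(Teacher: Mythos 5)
There is a genuine error at the heart of your proposal: the claim that the two components of $\|\nabla_{m_1,m_2}u_\lambda\|_{L^2}^2$ scale with \emph{different} powers of $\lambda$. This comes from a sign slip in the change of variables. With $\overline y=\lambda^{-m_2-2}y$ one has $|y|^{m_1}=\lambda^{m_1m_2+2m_1}|\overline y|^{m_1}$, whereas you substituted the reciprocal factor $\lambda^{-m_1m_2-2m_1}$. With the correct factor, the $x$-component of the weighted gradient picks up $\lambda^{m_1m_2+2m_1}\cdot\lambda^{-2m_1-4}\cdot\lambda^{m_1+m_2+4}=\lambda^{m_1+m_2+m_1m_2}$, and the $y$-component, by the symmetric computation, picks up exactly the same power. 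This homogeneity is not an accident: the exponents $m_1+2$ and $m_2+2$ in $\phi_\lambda$ are chosen precisely so that $\mathcal{O}$, and hence the weighted Dirichlet integral, scales by a single power of $\lambda$ (this is what the paper verifies when checking $\mathcal{O}(u_\lambda)=0$). Consequently your proposed workaround --- bounding by $\lambda^{s}$ with $s=\min\{m_2-3m_1-m_1m_2,\ m_1-3m_2-m_1m_2\}$, or choosing test functions ``adapted to the dilation'' so that both components scale alike --- addresses a difficulty that does not exist, and, more seriously, it would not produce the stated exponent: neither of your two exponents equals $m_1+m_2+m_1m_2$, so the borderline equality $\frac{m_1+m_2+4}{p}=\frac{m_1+m_2+m_1m_2}{2}$ that you write at the end is asserted rather than derived from your own scaling analysis.

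Once the sign is corrected, the argument collapses to exactly the paper's proof: $\|u_\lambda\|_{L^p}^p=\lambda^{m_1+m_2+4}\|u\|_{L^p}^p$ and $\|\nabla_{m_1,m_2}u_\lambda\|_{L^2}^2=\lambda^{m_1+m_2+m_1m_2}\|\nabla_{m_1,m_2}u\|_{L^2}^2$; inserting $u_\lambda$ into \eqref{SGN} gives $\|u\|_{L^p}\le C\,\lambda^{r}\|\nabla_{m_1,m_2}u\|_{L^2}$ with $r=-\frac{m_1+m_2+4}{p}+\frac{m_1+m_2+m_1m_2}{2}$, and letting $\lambda\to0^+$ forces $r\le0$, i.e.\ $p\le 2^*(m_1,m_2)$. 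Your concern about whether $u_\lambda$ stays supported in $\Omega$ for $\lambda\in(0,1]$ is reasonable (the paper is silent on it; it is harmless when $\Omega$ is $D$-star-shaped, since $\phi_\lambda^{-1}$ is the time-$t$ flow of $D$ with $\lambda=e^{-t}$), but that is a side point; the substantive gap is the scaling computation above.
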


\begin{proof}
    If $u=0$, \eqref{SGN} is trivially held. Take $\ u\in C^\infty_0(\Omega)-\{0\}$ arbitrary. Using the change of variables theorem gives 
$$||u_\lambda||^p_{L^p(\Omega)}=\lambda^{m_1+m_2+4}||u||^p_{L^p(\Omega)}$$ and $$||\nabla_{m_1,m_2}u_\lambda||^2_{L^2(\Omega)}=\lambda^{m_1+m_2+m_1m_2}||\nabla_{m_1,m_2}u||^2_{L^2(\Omega)}.$$

\vspace{0,3cm}

Replacing $u_\lambda$ em \eqref{SGN}, we have

\begin{align*}
   ||u||_{L^p(\Omega)}&=\lambda^{-\frac{m_1+m_2+4}{p}}||u_\lambda||_{L^p(\Omega)}\leq C \lambda^{-\frac{m_1+m_2+4}{p}}||\nabla_{m_1,m_2}u_\lambda||_{L^2(\Omega)} \\
   &= C \lambda^{\left(-\frac{m_1+m_2+4}{p}+\frac{m_1+m_2+m_1m_2}{2}\right)}||\nabla_{m_1,m_2}u||_{L^2(\Omega)}.
\end{align*}

Denote $r=-\frac{m_1+m_2+4}{p}+\frac{m_1+m_2+m_1m_2}{2}$ and suppose that $r>0$. Then, making $\lambda\rightarrow0^+$, we get $||u||_{L^p(\Omega)}\leq 0$ but $u\neq0$. In conclusion, 

$$r=-\frac{m_1+m_2+4}{p}+\frac{m_1+m_2+m_1m_2}{2}\leq0,$$
 then
 $$p\leq \frac{2(m_1+m_2)+8}{m_1+m_2+m_1m_2}\cdot$$\end{proof}


\section{Tricomi domains and Poho\^{z}aev-type identities}\label{Sec3}

We will study the problem \eqref{PP} on three different domains in this paper, and we will construct Poho\^{z}aev-type identities for them. We start with the  problem

\begin{equation}\label{P1}
\begin{cases}
  \mathcal{O}(u):=-y^{m_1}u_{xx}-x^{m_2}u_{yy}=f(u)  &\mbox{in}\quad \Omega_1, \\
 \quad \ \ \;u=0   &\mbox{on}\quad AC\cup\sigma_1\subseteq\partial\Omega_1,
\end{cases}
\end{equation}
where $\Omega_1$ is a Tricomi domain for the operator $\mathcal{O}=-y^{m_1}\partial^2_x-x^{m_2}\partial^2_y$. That mean, $\Omega_1$ is open, bounded, simply connected set with piecewise $C^1$ boundary $\partial\Omega_1=\sigma_1\cup AC\cup BC$ formed by an elliptical arc $\sigma_1\subset\{(x,y)\in\mathbb{R}^2; \ y>0\}$ intercepting the axis $x$ at the points $A=(2x_0,0)$ and $B=(0,0)$ with $x_0<0$ and by the characteristics curves $AC$ and $BC$ in $\{(x,y)\in\mathbb{R}^2; \ x\leq0 \ \mbox{and} \ y\leq0\}$ of $\mathcal{O}$ passing through points $A$ and $B$ respectively, which meet at point $C$. See figure \ref{Sorv1}.

\begin{figure}[!ht]
    \centering
    \includegraphics[width=8cm,angle=-90]{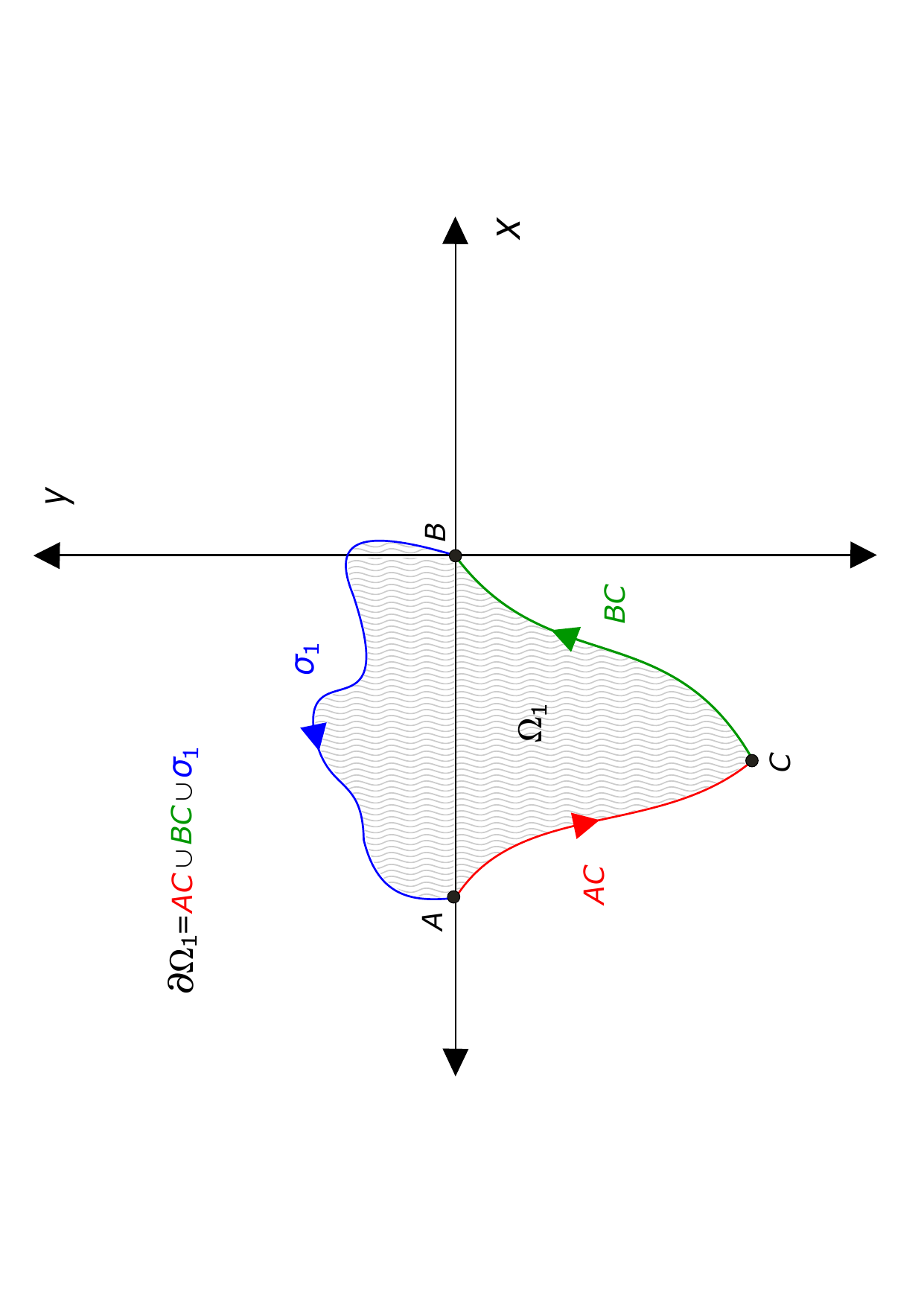}
    \caption{Domain $\Omega_1$ for the problem \eqref{P1}.}
    \label{Sorv1}
\end{figure}

\begin{remark}\label{OO}
The domain $\Omega_1$ depends on $m_1, m_2$ because the characteristics curves are associated with the operator $\mathcal{O}$, additionally $\Omega_1$ depends on $x_0$ which determine the parabolic diameter $2|x_0|=|AB|$ and its position in $\mathbb{R}^2$. This is why, we consider the problems \eqref{P2} and \eqref{P3}.
\end{remark}

\begin{definition}\label{CC}
Let $\Gamma(t)=(\alpha(t),\beta(t))$ be a smooth $(C^\infty)$ curve in $\mathbb{R}^2$. We say $\Gamma$ is a characteristics curve of equation 
$$au_{xx}+2bu_{xy}+cu_{yy}=d,$$
if $a(\beta'(t))^2-2b(\alpha'(t))(\beta'(t))+c(\alpha'(t))^2=0$. 
\end{definition}

\vspace{0,3cm}

For the problem \eqref{P1}, $a=-y^{m_1}$, $b=0$ and $c=-x^{m_2}$. By the Definition \ref{CC} its characteristic curves satisfies 
\begin{equation}\label{CCC}
    (-y^{m_1})(\beta'(t))^2+(-x^{m_2})(\alpha'(t))^2=0.
\end{equation}

We know $\Gamma$ can be written locally as the graph of a function because, $\Gamma$ is a smooth curve. Suppose $\Gamma(x)=(x,y(x))$, in this case, $\alpha(x)=x$ and $\beta(x)=y(x)$ so $\alpha'(x)=1$ and $\beta'(x)=y'(x)$. By \eqref{CCC}, we concluded that the characteristics curves of the problem \eqref{P1} are solutions of 
$$-y^{m_1}(y'(x))^2=x^{m_2}.$$

To solve this ordinary differential equation in $\{(x,y)\in\mathbb{R}^2; \ x\leq0 \ \mbox{and} \ y\leq0\}$, without loss of generality, consider $m_1\in\mathbb{N}$ odd and $\frac{m_2}{2}\in\mathbb{N}$ even . On this conditions the characteristics curves of \eqref{P1} satisfy  
$$(-y)^\frac{m_1}{2}|y'(x)|=x^\frac{m_2}{2}.$$

We will analyze $|y'(x)|$.

\begin{multicols}{3}
\begin{center}
\textbf{case 1}
     $$y'(x)<0$$
    $$(-y)^\frac{m_1}{2}(-y'(x))=x^\frac{m_2}{2}$$
\end{center}
\columnbreak
\begin{center}
\textbf{case 2}
   $$y'(x)=0$$
    $$y(x)=K_3$$
\end{center}
\raggedcolumns
\columnbreak
\begin{center}
\textbf{case 3}
    $$y'(x)>0$$
    $$(-y)^\frac{m_1}{2}y'(x)=x^\frac{m_2}{2}$$
\end{center}
\end{multicols}

In case 1, we get $\frac{2}{m_1+2}(-y)^\frac{m_1+2}{2}=\frac{2}{m_2+2}x^\frac{m_2+2}{2}+K_1$. In case 2, we get a contradiction with the choice of $\Gamma(x)=(x,y(x))$. In case 3, we get $\frac{2}{m_1+2}(-y)^\frac{m_1+2}{2}=-\frac{2}{m_2+2}x^\frac{m_2+2}{2}+K_2$. 

\vspace{0,3cm}

In order to construct $\partial\Omega_1$ and consequently $\Omega_1$, it is enough to substitute in the characteristic curves found the points $A=(2x_0,0)$ and $B=(0,0)$. In relation to Definition \ref{OPIE} we give the orientation for $\Omega_1$ and particularly for $AC$ when $y'(x)<0$ and for $BC$ when $y'(x)>0$.  We have

$$k_1=-\frac{2}{m_2+2}(2x_0)^\frac{m_2+2}{2} \ \ \mbox{and} \ \  k_2=0.$$

The characteristic curves $AC$ and $BC$ in $\{(x,y)\in\mathbb{R}^2; \ x\leq0 \ \mbox{and} \ y\leq0\}$ of the operator $\mathcal{O}$, for $m_1\in\mathbb{N}$ odd and $\frac{m_2}{2}\in\mathbb{N}$ even, are given by
\begin{align}\label{AC1}
    AC=\bigg\{(x,y)\in\mathbb{R}^2;& \ y_c\leq y\leq0, \nonumber\\
&\frac{2}{m_1+2}(-y)^\frac{m_1+2}{2}=\frac{2}{m_2+2}\left[x^\frac{m_2+2}{2}-(2x_0)^\frac{m_2+2}{2}\right] \bigg\},
\end{align}
\begin{align}\label{BC1}
BC=\bigg\{ (x,y)\in\mathbb{R}^2; \ y_c\leq y\leq0, \ \frac{2}{m_1+2}(-y)^\frac{m_1+2}{2}=-\frac{2}{m_2+2}x^\frac{m_2+2}{2} \bigg\},
\end{align}
and the point $C=\left((\frac{1}{2})^\frac{2}{m_2+2}(2x_0), y_c\right)$ where $$y_c=-\left[-\frac{1}{2}\left(\frac{m_1+2}{m_2+2}\right)(2x_0)^\frac{m_2+2}{2}\right]^\frac{2}{m_1+2}.$$

Parameterizing \eqref{AC1} and \eqref{BC1} with respedt to the variable $y$, the exterior normal vector will be given by
\begin{align}\label{nAC1}
\eta_{AC}=\left(-1,-\left[\frac{m_2+2}{m_1+2}(-y)^\frac{m_1+2}{2}+(2x_0)^\frac{m_2+2}{2}\right]^\frac{-m_2}{m_2+2}(-y)^\frac{m_1}{2}\right),
\end{align}
and
\begin{align}\label{nBC1}
\eta_{BC}=\left(1,-\left[-\frac{m_2+2}{m_1+2}(-y)^\frac{m_1+2}{2}\right]^\frac{-m_2}{m_2+2}(-y)^\frac{m_1}{2}\right),
\end{align}
respectively. 


As mentioned in Remark \ref{OO}, we will consider the problem \eqref{P2} and its respective characteristic curves which are found analogously to problem \eqref{P1}, namely 

\begin{equation}\label{P2}
\begin{cases}
  \mathcal{O}(u):=-y^{m_1}u_{xx}-x^{m_2}u_{yy}=f(u) &\mbox{in}\quad \Omega_2, \\
\quad \ \ \;u=0 &\mbox{on}\quad AC\cup\sigma_2\subseteq\partial\Omega_2,
\end{cases}
\end{equation}
where $\Omega_2$ is a Tricomi domain for the operator $\mathcal{O}=-y^{m_1}\partial^2_x-x^{m_2}\partial^2_y$. That is, $\Omega_2$ is open, bounded, simply connected set with piecewise $C^1$ boundary $\partial\Omega_2=\sigma_2\cup AC'\cup BC'$ formed by an elliptical arc $\sigma_2\subset\{(x,y)\in\mathbb{R}^2; \ y>0\}$ intercepting the axis $x$ at the points $A'=(2x_0,0)$ and $B'=(0,0)$ with $x_0>0$ and the characteristics curves $AC'$ and $BC'$ in $\{(x,y)\in\mathbb{R}^2; \ x\geq0 \ \mbox{and} \ y\leq0\}$ of $\mathcal{O}$ passing through points $A'$ and $B'$ respectively and meet at point $C'$. See figure \ref{Sorv2}.

\begin{figure}[!ht]
    \centering
    \includegraphics[width=8cm,angle=-90]{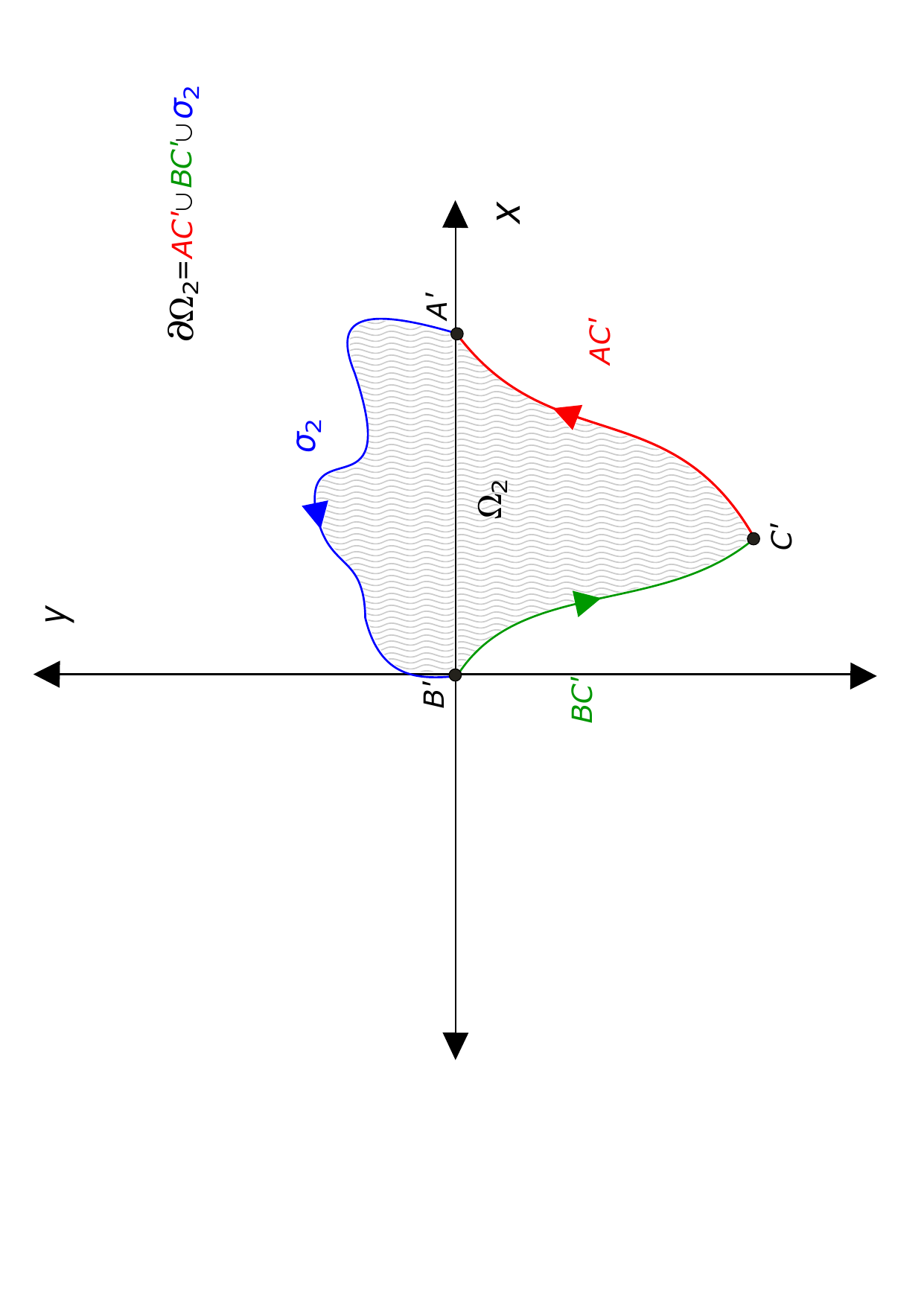}
    \caption{Domain $\Omega_2$ for the problem \eqref{P2}.}
    \label{Sorv2}
\end{figure}

The characteristics curves $AC'$ and $BC'$ in $\{(x,y)\in\mathbb{R}^2; \ x\geq0 \ \mbox{and} \ y\leq0\}$ of the operator $\mathcal{O}$ for $m_1\in\mathbb{N}$ odd number and $m_2\in\mathbb{N}$, are given by
\begin{align}\label{AC2}
    AC'=\bigg\{ (x,y)\in&\mathbb{R}^2; \ y_c\leq y\leq0, \nonumber\\
&\frac{2}{m_1+2}(-y)^\frac{m_1+2}{2}=-\frac{2}{m_2+2}\left[x^\frac{m_2+2}{2}-(2x_0)^\frac{m_2+2}{2}\right] \bigg\},
\end{align}
\begin{align}\label{BC2}
BC'=\bigg\{ (x,y)\in\mathbb{R}^2; \ y_{c'}\leq y\leq0, \ \frac{2}{m_1+2}(-y)^\frac{m_1+2}{2}=\frac{2}{m_2+2}x^\frac{m_2+2}{2} \bigg\},
\end{align}
and the point $C'=\left((\frac{1}{2})^\frac{2}{m_2+2}(2x_0),y_{c'}\right)$ where $$y_{c'}=-\left[\frac{1}{2}\left(\frac{m_1+2}{m_2+2}\right)(2x_0)^\frac{m_2+2}{2}\right]^\frac{2}{m_1+2}.$$

Parameterizing \eqref{AC2} and \eqref{BC2} in variable $y$,  the exterior normal vector be given by
\begin{align}\label{nAC2}
\eta_{AC'}=\left(1,-\left[-\frac{m_2+2}{m_1+2}(-y)^\frac{m_1+2}{2}+(2x_0)^\frac{m_2+2}{2}\right]^\frac{-m_2}{m_2+2}(-y)^\frac{m_1}{2}\right),
\end{align}
and
\begin{align}\label{nBC2}
\eta_{BC'}=\left(-1,-\left[\frac{m_2+2}{m_1+2}(-y)^\frac{m_1+2}{2}\right]^\frac{-m_2}{m_2+2}(-y)^\frac{m_1}{2}\right),
\end{align}
respectively. 


We also study the problem

\begin{equation}\label{P3}
\begin{cases}
  \mathcal{O}(u):=-y^{m_1}u_{xx}-x^{m_2}u_{yy}=f(u) &\mbox{in}\quad \Omega_3, \\
\quad \ \ \;u=0 &\mbox{on}\quad AC\cup\sigma_3\subseteq\partial\Omega_3,
\end{cases}
\end{equation}
where $\Omega_3$ is a Tricomi domain for the operator $\mathcal{O}=-y^{m_1}\partial^2_x-x^{m_2}\partial^2_y$. That is, $\Omega_3$ is open, bounded, simply connected set with piecewise $C^1$ boundary $\partial\Omega_3=\sigma_3\cup AC''\cup BC''$ formed by an elliptical arc $\sigma_3\subset\{(x,y)\in\mathbb{R}^2; \ x<0\}$ intercepting the axis $y$ at the points $A''=(0,2y_0)$ and $B''=(0,0)$ with $y_0<0$ and the characteristics curves $AC''$ and $BC''$ in $\{(x,y)\in\mathbb{R}^2; \ x\geq0 \ \mbox{and} \ y\leq0\}$ of $\mathcal{O}$ passing through points $A''$ and $B''$ respectively and meet at point $C''$. See figure \ref{Sorv3}.

\begin{figure}[!ht]
    \centering
    \includegraphics[width=8cm,angle=-90]{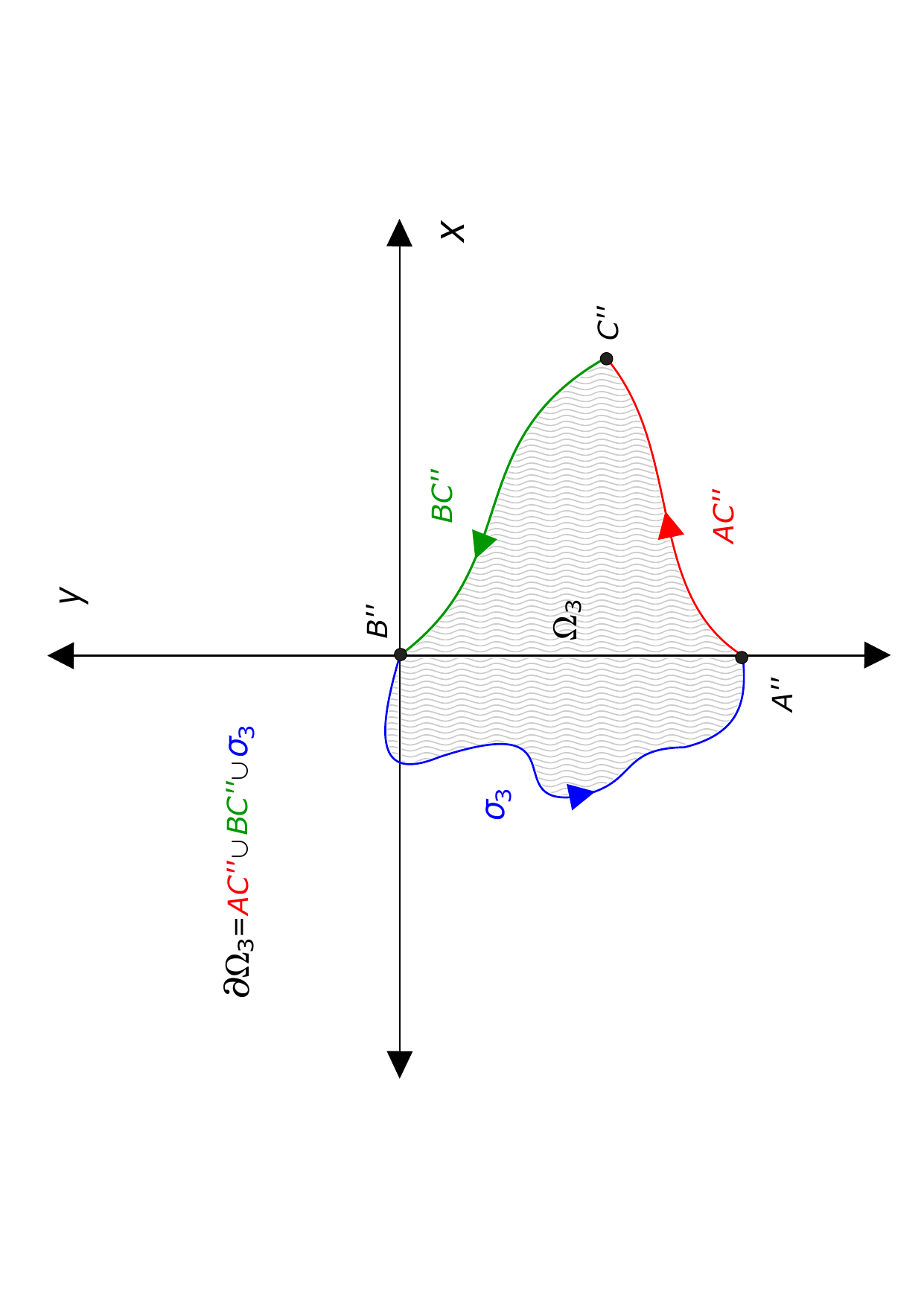}
    \caption{Domain $\Omega_3$ for the problem \eqref{P3}.}
    \label{Sorv3}
\end{figure}

To construction of the domain $\Omega_3$, we suppose $\Gamma(y)=(x(y),y)$, locally as the graph of a function, in this case $\alpha(y)=x(y)$ and $\beta(y)=y$, so $\alpha'(y)=x'(y)$ and $\beta'(y)=1$. Inserting in \eqref{CCC},  we concluded that the characteristics curves of the problem \eqref{P3} are solutions of 
$$-y^{m_1}=x^{m_2}(x'(y))^2.$$

With calculations analogous to those made to find the expressions of the characteristic curves \eqref{AC1} and \eqref{BC1} of problem \eqref{P1}, analyzing $|x'(y)|$ instead of $|y'(x)|$, the characteristics curves $AC''$ and $BC''$ in $\{(x,y)\in\mathbb{R}^2; \ x\geq0 \ \mbox{and} \ y\leq0\}$ of the operator $\mathcal{O}$, for $m_1\in\mathbb{N}$ odd number and $m_2\in\mathbb{N}$, are given by 
\begin{align}\label{AC3}
    AC''=\bigg\{ (x,y)&\in\mathbb{R}^2; \ 0\leq x\leq x_c, \nonumber\\
&-\frac{2}{m_1+2}\left[(-y)^\frac{m_1+2}{2}-(-2y_0)^\frac{m_1+2}{2}\right]=\frac{2}{m_2+2}x^\frac{m_2+2}{2} \bigg\},
\end{align}
\begin{align}\label{BC3}
BC''=\bigg\{ (x,y)\in\mathbb{R}^2; \ 0\leq x\leq x_{c''}, \ \frac{2}{m_1+2}(-y)^\frac{m_1+2}{2}=\frac{2}{m_2+2}x^\frac{m_2+2}{2} \bigg\},
\end{align}
and the point $C''=\left(x_{c''},(\frac{1}{2})^\frac{2}{m_1+2}(2y_0)\right)$ where $$x_{c''}=\left[\frac{1}{2}\left(\frac{m_2+2}{m_1+2}\right)(-2y_0)^\frac{m_1+2}{2}\right]^\frac{2}{m_2+2}.$$

Parameterizing \eqref{AC3} and \eqref{BC3} in variable $x$,  the exterior normal vector be given by
\begin{align}\label{nAC3}
\eta_{AC''}=\left( \left[-\frac{m_1+2}{m_2+2}x^\frac{m_2+2}{2}+(-2y_0)^\frac{m_1+2}{2}\right]^\frac{-m_1}{m_1+2}x^\frac{m_2}{2},-1 \right),
\end{align} 
and 
\begin{align}\label{nBC3}
\eta_{BC''}=\left(\left[\frac{m_1+2}{m_2+2}x^\frac{m_2+2}{2}\right]^\frac{-m_1}{m_1+2}x^\frac{m_2}{2},1\right),
\end{align}
respectively. 


Now, we will state the theorems with the Poho\^{z}aev-type identities for the mentioned problems \eqref{P1}, \eqref{P2} and \eqref{P3} and we will give the respective proof of the first one and we will comment on what is relevant to carry out the proof of the remaining two.

\begin{theorem}\label{T1}
Let $\Omega_1$ be a Tricomi domain for the operator $\mathcal{O}$ with $m_1\in\mathbb{N}$ odd and $\frac{m_2}{2}\in\mathbb{N}$ even numbers, whose boundary $\partial\Omega_1$ is piecewise $C^1$ with the exterior unit normal vector $\eta$. If $u\in C^2(\overline{\Omega_1})$ is a solution of the problem \eqref{P1}, then 
\begin{align}\label{PI1}
    (m_1+m_2+4)\int_{\Omega_1} F(u)-\frac{m_1+m_2+m_1m_2}{2}&\int_{\Omega_1} uf(u)= \nonumber\\ &\frac{1}{2}\left[\int_{BC\cup\sigma_1}\omega_1+\int_{BC}\omega_2\right],
\end{align}
where $F$ is the primitive function of $f\in C^0(\mathbb{R})$ such that $F(0)=0$, $\omega_1$ and $\omega_2$ are defined by 
\begin{align}\label{w1}
    \omega_1=\Bigl[2Du(-y^{m_1}u_x,&-x^{m_2}u_y) \nonumber\\ +(y^{m_1}u^2_x&+x^{m_2}u^2_y)(-(m_1+2)x,-(m_2+2)y\Bigl]\cdot\eta \, ,
\end{align}
\begin{align}\label{w2}
    \omega_2=\Bigl[-2F(u)(-(m_1+2)x,&-(m_2+2)y) \nonumber\\ 
    -(m_1+m_2&+m_1m_2)u (-y^{m_1}u_x,-x^{m_2}u_y)\Bigl]\cdot\eta \, ,
\end{align}
and $D$ is the vector field 
\begin{align}
    D=-(m_1+2)x\partial_x-(m_2+2)y\partial_y.
\end{align}
\end{theorem}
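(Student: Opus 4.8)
The plan is to derive a Rellich--Pohožaev identity by multiplying the equation $\mathcal{O}(u)=f(u)$ by the vector-field multiplier $Du = -(m_1+2)xu_x-(m_2+2)yu_y$, integrating over $\Omega_1$, and converting everything to boundary integrals via the divergence theorem, so that the Dirichlet condition on $AC\cup\sigma_1$ kills the corresponding pieces and only $BC$ (plus $\sigma_1$ in $\omega_1$) survives. Concretely, I would start from
\[
\int_{\Omega_1}\bigl(-y^{m_1}u_{xx}-x^{m_2}u_{yy}\bigr)\,Du = \int_{\Omega_1} f(u)\,Du,
\]
and treat the two sides separately. Since the natural differential operator here is $\mathcal{X}u = (-y^{m_1}u_x,-x^{m_2}u_y)$ with $\mathcal{O}(u)=\operatorname{div}(\mathcal{X}u)$, I would write the left side as $\int_{\Omega_1}\operatorname{div}(\mathcal{X}u)\,Du$ and integrate by parts once: $\int_{\Omega_1}\operatorname{div}(\mathcal{X}u)\,Du = \int_{\partial\Omega_1}(Du)(\mathcal{X}u)\cdot\eta - \int_{\Omega_1}\mathcal{X}u\cdot\nabla(Du)$.

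The heart of the computation is the interior term $\int_{\Omega_1}\mathcal{X}u\cdot\nabla(Du)$. Expanding $\nabla(Du)$ and collecting terms, one gets a combination of $y^{m_1}u_x^2$, $x^{m_2}u_y^2$ and mixed second-order expressions; the mixed ones and the "diagonal" ones should reorganize, after another integration by parts, into $\operatorname{div}$ of $(y^{m_1}u_x^2+x^{m_2}u_y^2)$ times the field $(-(m_1+2)x,-(m_2+2)y)$ plus a leftover multiple of $(y^{m_1}u_x^2+x^{m_2}u_y^2)$ itself. This is exactly where the arithmetic of the weights enters: the divergence of that field is $-(m_1+2)-(m_2+2) = -(m_1+m_2+4)$, while differentiating the weights $y^{m_1}$ and $x^{m_2}$ along the field produces extra factors $-(m_1+2)m_1$ and $-(m_2+2)m_2$; these must be bookkept carefully — this is the step the authors flag as delicate because of the genuinely two-weight structure (both $y^{m_1}$ and $x^{m_2}$ present), and it is the main obstacle. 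The parity hypotheses ($m_1$ odd, $m_2/2$ even) are what make $y^{m_1}$ and $x^{m_2}$, hence the characteristic curves and the normal vectors \eqref{nAC1}--\eqref{nBC1}, well-defined and sign-consistent on the relevant region, so boundary terms on $AC$ can be handled; I would invoke this when discarding the $AC$ contributions using $u=0$ there.

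For the right side, $\int_{\Omega_1} f(u)\,Du = \int_{\Omega_1} Du\cdot\nabla(F(u)\text{-type})$: since $f(u)Du = -(m_1+2)x\,\partial_x F(u) - (m_2+2)y\,\partial_y F(u) = D(F(u))$, integration by parts gives $\int_{\Omega_1} D(F(u)) = \int_{\partial\Omega_1} F(u)\,(-(m_1+2)x,-(m_2+2)y)\cdot\eta - \int_{\Omega_1} F(u)\operatorname{div}(-(m_1+2)x,-(m_2+2)y) = \int_{\partial\Omega_1} F(u)(\cdots)\cdot\eta + (m_1+m_2+4)\int_{\Omega_1}F(u)$. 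Then I would also need the auxiliary identity coming from multiplying the equation by $u$ itself: $\int_{\Omega_1}\mathcal{O}(u)\,u = \int_{\Omega_1} u f(u)$, which after one integration by parts yields $\int_{\Omega_1}(y^{m_1}u_x^2+x^{m_2}u_y^2) = \int_{\partial\Omega_1} u(\mathcal{X}u)\cdot\eta + \int_{\Omega_1}uf(u)$, letting me eliminate the leftover $\int_{\Omega_1}(y^{m_1}u_x^2+x^{m_2}u_y^2)$ in favor of $\int_{\Omega_1}uf(u)$ at the price of the boundary term that becomes part of $\omega_2$. Finally I would assemble the three computations, cancel the interior gradient-square integral, use $u=0$ on $AC\cup\sigma_1$ to drop those boundary pieces (noting $Du=0$ and $u^2$-type terms vanish there while the $\sigma_1$ part of $\omega_1$ persists since $u_x,u_y$ need not vanish along $\sigma_1$ — actually with $u\equiv 0$ on $\sigma_1$ the tangential derivative vanishes but the normal one does not, so the $(y^{m_1}u_x^2+x^{m_2}u_y^2)$-term on $\sigma_1$ stays), and collect the coefficients to land on \eqref{PI1} with $\omega_1,\omega_2$ as in \eqref{w1}--\eqref{w2}. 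The only genuinely hard bookkeeping is matching the constant $\tfrac{m_1+m_2+m_1m_2}{2}$ in front of $\int uf(u)$, which comes precisely from combining the $(m_1+2)m_1 + (m_2+2)m_2$ weight-derivative contributions with the $-(m_1+m_2+4)$ divergence and the auxiliary $u$-multiplier identity.
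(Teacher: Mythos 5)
Your overall route is the same as the paper's: multiply by the anisotropic multiplier $Du$, use $\mathcal{O}u=\operatorname{div}(\mathcal{X}u)$ and the divergence theorem, reorganize $-\nabla(Du)\cdot\mathcal{X}u$ into $\tfrac12\operatorname{div}\bigl((y^{m_1}u_x^2+x^{m_2}u_y^2)(-(m_1+2)x,-(m_2+2)y)\bigr)$ plus a multiple of the quadratic form, treat the right-hand side via $f(u)\,Du=D(F(u))$, bring in the auxiliary identity obtained by multiplying the equation by $u$, and assemble. However, there is a genuine gap at the one step that is specific to this mixed-type setting: the vanishing of the $\omega_1$-contribution on the characteristic $AC$. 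You justify discarding it by claiming that ``$Du=0$ and $u^2$-type terms vanish there,'' but neither claim is correct. The field $D$ is tangent to $BC$ (which passes through the origin and is invariant under the anisotropic dilations) but \emph{not} to $AC$, so $u|_{AC}\equiv0$ does not force $Du|_{AC}=0$; and $y^{m_1}u_x^2+x^{m_2}u_y^2$ is a first-order quadratic term involving the normal derivative, which does not vanish just because $u$ does --- indeed you yourself keep exactly this term on $\sigma_1$ for that reason, so dismissing it on $AC$ by the same hypothesis $u=0$ is inconsistent.

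The missing idea is the characteristic (hyperbolic) structure of $AC$: introducing $\partial_{\pm}u=x^{-m_2/2}\bigl[x^{m_2/2}u_y\pm(-y)^{m_1/2}u_x\bigr]$, one has on the hyperbolic region $y^{m_1}u_x^2+x^{m_2}u_y^2=x^{m_2}(\partial_+u)(\partial_-u)$ and, using \eqref{nAC1}, $\mathcal{X}u\cdot\eta_{AC}=(-y)^{m_1/2}x^{m_2/2}\,\partial_-u$; since $\partial_-u$ is (up to a factor) the derivative along $AC$ and $u|_{AC}\equiv0$, both pieces of $\omega_1$ vanish on $AC$ even though $Du$ and the normal derivative do not. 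This is where the parity hypotheses on $m_1$, $m_2/2$ actually enter (they make the factorization and the powers $(-y)^{m_1/2}$, $x^{m_2/2}$ meaningful on $x\le0$, $y\le0$), not merely to make the normals ``well-defined.'' Without this argument the identity \eqref{PI1} would retain an uncontrolled integral over $AC$ and the proof does not close. A minor additional slip: differentiating the weights along the field gives the cross factors $-m_1(m_2+2)$ and $-m_2(m_1+2)$ (not $-m_1(m_1+2)$ and $-m_2(m_2+2)$); with the correct pairing each of $y^{m_1}u_x^2$ and $x^{m_2}u_y^2$ acquires the coefficient $\tfrac{m_1+m_2+m_1m_2}{2}$, which is what produces the constant in front of $\int_{\Omega_1}u f(u)$.
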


\begin{proof}
The proof will be done in 4 steps. In step 1, we will estimate $\int_{\Omega_1} Du\mathcal{O}u$. While in step 2, we will estimate $\int_{\Omega_1} Du f(u)$ and in step 3, we will estimate $\int_{\Omega_1} u f(u)$. Step 4, we will multiply Problem \eqref{P1} by $Du$, and we integrate over the domain where the regularity of the solution allows us to use the Divergence Theorem \cite{giltru} and we relate the previous steps to prove \eqref{PI1}.

\vspace{0,3cm}

\textbf{Step 1.} Claim,
\begin{align}\label{DOF}
    \int_{\Omega_1} Du \, \mathcal{O}u=&\frac{m_1+m_2+m_1m_2}{2} \int_{\Omega_1} \bigl(y^{m_1}u^2_x+x^{m_2}u^2_y\bigl) \nonumber\\ &+\frac{1}{2}\int_{BC\cup\sigma_1}\Bigl[2Du\bigl(-y^{m_1}u_x,-x^{m_2}u_y\bigr) \nonumber\\ &+\bigl(y^{m_1}u^2_x+x^{m_2}u^2_y\bigl)(-(m_1+2)x,-(m_2+2)y)\Bigl]\cdot\eta \, .
\end{align}

In fact, 
\begin{align}\label{div}
    div\Big((y^{m_1}u^2_x+x^{m_2}u^2_y)&(-(m_1+2)x,-(m_2+2)y)\Big) \nonumber \\ =&\nabla(y^{m_1}u^2_x+x^{m_2}u^2_y)(-(m_1+2)x,-(m_2+2)y)\nonumber \\
    &+(y^{m_1}u^2_x+x^{m_2}u^2_y)\, div\Big((-(m_1+2)x,-(m_2+2)y)\Big) \nonumber\\
    =&-2(m_1+2)xy^{m_1}u_xu_{xx}-2(m_1+2)x^{m_2+1}u_yu_{xy}\nonumber\\
    &-m_2(m_1+2)x^{m_2}u^2_y-m_1(m_2+2)y^{m_1}u^2_x-2(m_2+2)y^{m_1+1}u_xu_{xy}\nonumber\\
    &-2(m_2+2)yx^{m_1}u_yu_{yy}-(m_1+m_2+4)(y^{m_1}u^2_x+x^{m_2}u^2_y),
\end{align}

Define $\mathcal{X}u=(-y^{m_1}u_x,-x^{m_2}u_y)$. Since $Du=-(m_1+2)x\,u_x-(m_2+2)y\,u_y$, we found by simple calculations \begin{align}\label{DX}
    -\nabla(Du)\cdot\mathcal{X}u=&-(m_1+2)y^{m_1}u^2_x-(m_1+2)xy^{m_1}u_xu_{xx}\nonumber\\ 
    &-(m_2+2)y^{m_1+1}u_xu_{xy}-(m_1+2)x^{m_2+1}u_yu_{xy}\nonumber\\ 
    &-(m_2+2)x^{m_2}u^2_y-(m_2+2)yx^{m_2}u_yu_{yy}.
\end{align}
So, from \eqref{div} and \eqref{DX}, we get
\begin{align}\label{divDX}
    -\nabla(Du)\cdot\mathcal{X}u=&\frac{1}{2} \, div\left((y^{m_1}u^2_x+x^{m_2}u^2_y)(-(m_1+2)x,-(m_2+2)y)\right)\nonumber\\
    &+\frac{(m_1+m_2+m_1m_2)}{2}\bigl(y^{m_1}u^2_x+x^{m_2}u^2_y\bigl).
\end{align}

Using the Divergence Theorem \cite{giltru}, we gain

$$\int_{\Omega_1} div(Du\,\mathcal{X}u)=\int_{\partial\Omega_1}Du\,\mathcal{X}u\cdot\eta \, ,$$
by the proprieties of the divergent 

$$\int_{\Omega_1}\nabla(Du)\cdot\mathcal{X}u+\int_{\Omega_1} Du \, div(\mathcal{X}u)=\int_{\partial\Omega_1}Du\,\mathcal{X}u\cdot\eta \, .$$

Note that, $div(\mathcal{X}u)=\mathcal{O}u$. So, we obtain 

$$\int_{\Omega_1} Du \, \mathcal{O}u=\int_{\partial\Omega_1}Du\,\mathcal{X}u\cdot\eta+\int_{\Omega_1}-\nabla(Du)\cdot\mathcal{X}u.$$

Inserting \eqref{divDX} in the last equation 
\begin{align}\label{TDdivDX}
    \int_{\Omega_1} Du \, \mathcal{O}u=&\int_{\partial\Omega_1}Du\,\mathcal{X}u\cdot\eta \nonumber\\ 
    &+\frac{1}{2}\int_\Omega div\left((y^{m_1}u^2_x+x^{m_2}u^2_y)(-(m_1+2)x,-(m_2+2)y)\right) \nonumber\\ 
    &+\frac{(m_1+m_2+m_1m_2)}{2}\int_\Omega\bigl(y^{m_1}u^2_x+x^{m_2}u^2_y\bigl).
\end{align}

Using the Divergence Theorem again, we have
\begin{align}\label{TDdiv}
    \int_{\Omega_1} div&\left((y^{m_1}u^2_x+x^{m_2}u^2_y)(-(m_1+2)x,-(m_2+2)y)\right)= \nonumber\\ &\int_{\partial\Omega_1}\bigl(y^{m_1}u^2_x+x^{m_2}u^2_y\bigl)(-(m_1+2)x,-(m_2+2)y)\cdot\eta \, ,
\end{align}
and inserting \eqref{TDdiv} in \eqref{TDdivDX} we obtain
\begin{align*}
    \int_{\Omega_1} Du \, \mathcal{O}u=&\frac{(m_1+m_2+m_1m_2)}{2}\int_{\Omega_1}(y^{m_1}u^2_x+x^{m_2}u^2_y)\\ &+\frac{1}{2}\int_{\partial\Omega_1}\bigl[2Du(-y^{m_1}u_x,-x^{m_2}u_y) \\
    &+\bigl(y^{m_1}u^2_x+x^{m_2}u^2_y\bigl)(-(m_1+2)x,-(m_2+2)y)\bigl]\cdot\eta  \, ,
\end{align*}
this last equation is the identity \eqref{DOF} if the boundary integral is vanished on $AC$. Just prove that, $(y^{m_1}u^2_x+x^{m_2}u^2_y)_{|_{AC}}\equiv0$ and $(\mathcal{X}u\cdot\eta)_{|_{AC}}\equiv0$. Denote by, 
$$\partial_+u:=x^{-\frac{m_2}{2}}\Big[x^{\frac{m_2}{2}}u_y+(-y)^{\frac{m_1}{2}}u_x\Big],$$
and
$$\partial_-u:=x^{-\frac{m_2}{2}}\Big[x^{\frac{m_2}{2}}u_y-(-y)^{\frac{m_1}{2}}u_x\Big].$$

So, \begin{align*}
    (\partial_+u)(\partial_-u)&=x^{-\frac{m_2}{2}}\Big[x^{\frac{m_2}{2}}u_y+(-y)^{\frac{m_1}{2}}u_x\Big]x^{-\frac{m_2}{2}}\Big[x^{\frac{m_2}{2}}u_y-(-y)^{\frac{m_1}{2}}u_x\Big] \\
    &=x^{-m_2}\Big[x^{m_2}u_y^2-(-y)^{m_1}u_x^2\Big] \\
    &=x^{-m_2}\Big[x^{m_2}u_y^2+y^{m_1}u_x^2\Big].
\end{align*}

Moreover, we have  
\begin{align*}
    (-y^{m_1}&u_x,-x^{m_2}u_y)\cdot\eta_{AC}\\&=(y^{m_1}u^2_x+x^{m_2}u^2_y)\cdot\left(-1,-\left[\frac{m_2+2}{m_1+2}(-y)^\frac{m_1+2}{2}+(2x_0)^\frac{m_2+2}{2}\right]^\frac{-m_2}{m_2+2}(-y)^\frac{m_1}{2}\right) \\
    &=y^{m_1}u_x+x^{m_2}(-y)^{\frac{m_1}{2}}\Big[x^{\frac{m_2+2}{2}}\Big]^{-\frac{m_2}{m_2+2}}u_y \\
    &=(-y)^{\frac{m_1}{2}}x^{\frac{m_2}{2}}x^{-\frac{m_2}{2}}\Big[x^{\frac{m_2}{2}}u_y-(-y)^{\frac{m_1}{2}}u_x\Big] \\
    &=(-y)^{\frac{m_1}{2}}x^{\frac{m_2}{2}}\partial_-u.
\end{align*}

See that, $\partial_+u$ and $\partial_-u$ are essentially the directional derivatives along the characteristics $BC$ and $AC$ respectively. By hypothesis $u_{|_{AC}}\equiv 0$ which implies $\partial_-u_{|_{AC}}\equiv 0$, thus $(y^{m_1}u^2_x+x^{m_2}u^2_y)_{|_{AC}}\equiv0$ and $(\mathcal{X}u\cdot\eta)_{|_{AC}}\equiv0$. In conclusion, for $m_1$, $\frac{m_2}{2}$ odd and even numbers, respectively, \eqref{DOF} is true.

\vspace{0,3cm}
\textbf{Step 2.} Claim, 
\begin{align}\label{Df}
    \int_{\Omega_1} Du\,f(u)=&(m_1+m_2+4)\int_{\Omega_1} F(u) \nonumber\\
    &+\int_{BC}F(u)(-(m_1+2)x,-(m_2+2)y)\cdot\eta \, .
\end{align}

In fact. By the Divergence Theorem, we obtain
\begin{align*}
 \int_{\Omega_1} div\bigl( F(u)(-(m_1+2)x,&-(m_2+2)y) \bigl)= \nonumber\\
 &\int_{\partial\Omega_1}F(u)(-(m_1+2)x,-(m_2+2)y)\cdot\eta \, ,
\end{align*}
by the divergent´s proprieties gives
\begin{align}
\int_{\Omega_1}\nabla F(u)\cdot(-(&m_1+2)x,-(m_2+2)y)\nonumber\\
    +\int_{\Omega_1} F(u) \, div\bigl( (-&(m_1+2)x,-(m_2+2)y) \bigl)=\nonumber\\ &\int_{\partial\Omega_1}F(u)(-(m_1+2)x,-(m_2+2)y)\cdot\eta \, .
\end{align}

By the hypotheses, $F$ is the primitive function of $f$ so, $\nabla F(u)=f(u)\nabla u$. Also, $div\bigl( (-(m_1+2)x,-(m_2+2)y) \bigl)=-m_1-m_2-4$. Thus, 
\begin{align*}
    \int_{\Omega_1} f(u)&\nabla u\cdot(-(m_1+2)x,-(m_2+2)y)= \nonumber\\ &\int_{\partial\Omega_1}F(u)(-(m_1+2)x,-(m_2+2)y)\cdot\eta+(m_1+m_2+4)\int_{\Omega_1} F(u) \, .
\end{align*}

Note that, $$\nabla u\cdot(-(m_1+2)x,-(m_2+2)y)=-(m_1+2)x\,u_x-(m_2+2)y\,u_y=Du.$$ Then,
\begin{align*}
    \int_{\Omega_1} Du\,f(u)=&(m_1+m_2+4)\int_{\Omega_1} F(u) \nonumber\\
    &+\int_{\partial\Omega_1}F(u)(-(m_1+2)x,-(m_2+2)y)\cdot\eta\, .
\end{align*}
To prove \eqref{Df} just see that $u_{|_{AC\cup\sigma_1}}=0$ by the boundary condition and $F(0)=0$ by hypotheses.

\vspace{0,3cm}

\textbf{Step 3.} Claim, 
\begin{align}\label{uf}
    \int_{\Omega_1} u\,f(u)=\int_{\Omega_1}(y^{m_1}u^2_x+x^{m_2}u^2_y)+\int_{BC}u\,(-y^{m_1}u_x,-x^{m_2}u_y)\cdot\eta \, .
\end{align}

In fact. By the Divergence Theorem, we get 
\begin{align*}
    \int_{\Omega_1} div\left(u\,(-y^{m_1}u_x,-x^{m_2}u_y)\right)=\int_{\partial\Omega_1}u\,(-y^{m_1}u_x,-x^{m_2}u_y)\cdot\eta\, ,
\end{align*}
by the divergent´s proprieties give us
\begin{align*}
    \int_{\Omega_1} \nabla u\cdot(-y^{m_1}u_x,-x^{m_2}u_y) + \int_{\Omega_1} u\,div\bigl((-y^{m_1}&u_x,-x^{m_2}u_y)\bigl)= \\ &\int_{\partial\Omega_1}u\,(-y^{m_1}u_x,-x^{m_2}u_y)\cdot\eta\, .
\end{align*}

Since $\nabla u =(u_x,u_y)$ and $div\bigl((-y^{m_1}u_x,-x^{m_2}u_y)\bigl)=\mathcal{O}u$, we get
\begin{align*}
    \int_{\Omega_1} u\,\mathcal{O}u=\int_{\Omega_1}(y^{m_1}u^2_x+x^{m_2}u^2_y)+ \int_{\partial\Omega_1}u\,(-y^{m_1}u_x,-x^{m_2}u_y)\cdot\eta\, .
\end{align*}
This last equation is \eqref{uf} just observing that, $\mathcal{O}u=f(u)$ in $\Omega_1$ and $u_{|_{AC\cup\sigma_1}}=0$.

\vspace{0,3cm}

\textbf{Step 4.} Proof the Poho\^{z}aev identity \eqref{PI1}. 

\vspace{0,3cm}
Multiplying $\mathcal{O}u=f(u)$ by $Du$ in $\Omega_1$, inserting \eqref{DOF} from Step 1 and \eqref{Df} from Step 2, gives
\begin{align}\label{DODf}
   (m_1+m_2+4)\int_{\Omega_1} F(u)=& \frac{m_1+m_2+m_1m_2}{2} \int_{\Omega_1} \bigl(y^{m_1}u^2_x+x^{m_2}u^2_y\bigl) \nonumber\\ &+\frac{1}{2}\int_{BC\cup\sigma_1}\Bigl[2Du\bigl(-y^{m_1}u_x,-x^{m_2}u_y\bigr) \nonumber\\ &+\bigl(y^{m_1}u^2_x+x^{m_2}u^2_y\bigl)(-(m_1+2)x,-(m_2+2)y)\Bigl]\cdot\eta \nonumber\\ &-\int_{BC}F(u)(-(m_1+2)x,-(m_2+2)y)\cdot\eta\, .
\end{align}

Multiplying \eqref{uf} in Step 3 by $-\frac{m_1+m_2+m_1m_2}{2}$ give us
\begin{align}\label{ufM}
    -\frac{m_1+m_2+m_1m_2}{2}&\int_{\Omega_1} u\,f(u)=-\frac{m_1+m_2+m_1m_2}{2}\int_{\Omega_1}(y^{m_1}u^2_x+x^{m_2}u^2_y)\nonumber\\ &-\frac{m_1+m_2+m_1m_2}{2}\int_{BC}u\,(-y^{m_1}u_x,-x^{m_2}u_y)\cdot\eta \, .
\end{align}

Adding the identities \eqref{DODf} and \eqref{ufM}, grouping the integrals and inserting \eqref{w1} and \eqref{w2}, we get the result. 
\end{proof}

\begin{theorem}\label{T2}
Let $\Omega_2$ be a Tricomi domain for the operator $\mathcal{O}$ with $m_1\in\mathbb{N}$ odd number and $m_2\in\mathbb{N}$, whose boundary $\partial\Omega_2$ is piecewise $C^1$ with the exterior unitary normal vector $\eta$. If $u\in C^2(\overline{\Omega_2})$ is a solution of the problem \eqref{P2}, then 
\begin{align}\label{PI2}
    (m_1+m_2+4)\int_{\Omega_2} F(u)-\frac{m_1+m_2+m_1m_2}{2}&\int_{\Omega_2} uf(u)= \nonumber\\ &\frac{1}{2}\left[\int_{BC'\cup\sigma_2}\omega_1+\int_{BC'}\omega_2\right],
\end{align}
where $F$ is the primitive of function of $f\in C^0(\mathbb{R})$ such that $F(0)=0$, $\omega_1$ and $\omega_2$ as in \eqref{w1} and \eqref{w2} respectively.
\end{theorem}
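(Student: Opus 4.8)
The plan is to repeat, almost verbatim, the four-step argument used for Theorem \ref{T1}. Every identity obtained there came from the product rule together with the Divergence Theorem on a bounded piecewise-$C^1$ domain, so it does not see the particular shape of $\Omega$; the geometry enters only through (i) which portion of the boundary carries the Dirichlet data and (ii) the explicit form of the exterior normal along the characteristic carrying it. Thus in Step~1 I would establish the analogue of \eqref{DOF} with $\Omega_1$, $BC\cup\sigma_1$ replaced by $\Omega_2$, $BC'\cup\sigma_2$, starting from the purely algebraic pointwise identity \eqref{divDX} (unchanged), applying the Divergence Theorem twice on $\Omega_2$, and then showing that the boundary integrand vanishes on $AC'$. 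Steps~2 and~3 yield the analogues of \eqref{Df} and \eqref{uf}, again with $BC,AC\cup\sigma_1$ replaced by $BC',AC'\cup\sigma_2$; these use nothing beyond the Divergence Theorem, $\nabla F(u)=f(u)\nabla u$, $F(0)=0$ and $u|_{AC'\cup\sigma_2}=0$, so they transcribe without change. Step~4 multiplies $\mathcal{O}u=f(u)$ by $Du$, inserts Steps~1 and~2, adds $-\frac{m_1+m_2+m_1m_2}{2}$ times Step~3, and regroups the boundary terms into $\omega_1,\omega_2$ as in \eqref{w1}--\eqref{w2} to produce \eqref{PI2}.

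The one point that genuinely requires attention is the vanishing of the boundary contribution on the characteristic $AC'$. As on $\Omega_1$, put $\partial_{\pm}u := x^{-m_2/2}\bigl[x^{m_2/2}u_y\pm(-y)^{m_1/2}u_x\bigr]$, so that, using that $m_1$ is odd and $y\le 0$ on $AC'$,
\[
(\partial_+u)(\partial_-u)=x^{-m_2}\bigl[x^{m_2}u_y^2-(-y)^{m_1}u_x^2\bigr]=x^{-m_2}\bigl[x^{m_2}u_y^2+y^{m_1}u_x^2\bigr].
\]
I would then contract $\mathcal{X}u=(-y^{m_1}u_x,-x^{m_2}u_y)$ with the exterior normal $\eta_{AC'}$ of \eqref{nAC2}: substituting the defining relation of $AC'$ from \eqref{AC2} into the bracket appearing in \eqref{nAC2} collapses it to $x^{-m_2/2}$, and after rewriting $-y^{m_1}=(-y)^{m_1}$ one obtains $\mathcal{X}u\cdot\eta_{AC'}=(-y)^{m_1/2}x^{m_2/2}\,\partial_+u$. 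Since $\partial_+u=0$ is precisely the ODE whose integral curves form the $AC'$-family of \eqref{AC2}, the quantity $\partial_+u$ is (a nonvanishing multiple of) the tangential derivative of $u$ along $AC'$; hence $u|_{AC'}=0$ forces $\partial_+u|_{AC'}\equiv0$, whence both $(\mathcal{X}u\cdot\eta)|_{AC'}\equiv0$ and $(y^{m_1}u_x^2+x^{m_2}u_y^2)|_{AC'}\equiv0$. This is exactly what is needed to drop the $AC'$-piece of the boundary integral in Step~1 and recover the claimed identity over $BC'\cup\sigma_2$ only.

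The main obstacle is therefore bookkeeping rather than ideas: because here only $m_1$ is assumed odd while $m_2\in\N$ is arbitrary, one must check that all fractional powers $x^{(m_2+2)/2}$, $(-y)^{(m_1+2)/2}$, $x^{-m_2/2}$ appearing in \eqref{nAC2}, \eqref{nBC2}, \eqref{AC2}, \eqref{BC2} and in the factorization above are legitimate real numbers on the quadrant $\{x\ge 0,\ y\le 0\}$ that contains $AC'$ and $BC'$, and that the orientation of $\partial\Omega_2$ fixed in Definition \ref{OPIE} makes the signs in \eqref{nAC2}--\eqref{nBC2} consistent with those used when collapsing the normal. Once the $AC'$ term is disposed of, the remaining three steps are line-by-line copies of the corresponding steps in the proof of Theorem \ref{T1}.
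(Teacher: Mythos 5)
Your proposal is correct and follows essentially the same route as the paper, which proves Theorem \ref{T2} by remarking that Steps 2--4 of the proof of Theorem \ref{T1} carry over unchanged and that only Step 1 needs the directional derivatives along the characteristics to show $(y^{m_1}u_x^2+x^{m_2}u_y^2)_{|_{AC'}}\equiv0$ and $(\mathcal{X}u\cdot\eta)_{|_{AC'}}\equiv0$. Your explicit computation on $AC'$ (collapsing the bracket in \eqref{nAC2} via \eqref{AC2} and identifying $\partial_+u$ as the tangential derivative along $AC'$, with the roles of $\partial_\pm$ swapped relative to $\Omega_1$) is exactly the point the paper leaves to the reader, and it is carried out correctly.
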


\begin{theorem}\label{T3}
Let $\Omega_3$ be a Tricomi domain for the operator $\mathcal{O}$ with $m_1\in\mathbb{N}$ odd number and $m_2\in\mathbb{N}$, whose boundary $\partial\Omega_3$ is piecewise $C^1$ with the exterior unitary normal vector $\eta$. If $u\in C^2(\overline{\Omega_3})$ is a solution of the problem \eqref{P3}, then 
\begin{align}\label{PI3}
    (m_1+m_2+4)\int_{\Omega_3} F(u)-\frac{m_1+m_2+m_1m_2}{2}&\int_{\Omega_3} uf(u)= \nonumber\\ &\frac{1}{2}\left[\int_{BC''\cup\sigma_3}\omega_1+\int_{BC''}\omega_2\right],
\end{align}
where $F$ is the primitive function of $f\in C^0(\mathbb{R})$ such that $F(0)=0$, $\omega_1$ and $\omega_2$ as in \eqref{w1} and \eqref{w2} respectively.
\end{theorem}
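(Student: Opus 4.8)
The plan is to re-run the four-step argument that proves Theorem \ref{T1}. That argument has two ingredients: divergence identities (Steps~1--3) that are purely algebraic and therefore independent of the domain, and the vanishing of one boundary integral on the characteristic arc on which the Dirichlet datum is prescribed. The algebraic part transfers verbatim: with $\mathcal{X}u=(-y^{m_1}u_x,-x^{m_2}u_y)$, $div(\mathcal{X}u)=\mathcal{O}u$ and $Du=-(m_1+2)xu_x-(m_2+2)yu_y$, the pointwise identity of Step~1,
\begin{align*}
    -\nabla(Du)\cdot\mathcal{X}u=&\tfrac{1}{2}\,div\!\left((y^{m_1}u^2_x+x^{m_2}u^2_y)(-(m_1+2)x,-(m_2+2)y)\right)\\
    &+\tfrac{m_1+m_2+m_1m_2}{2}\bigl(y^{m_1}u^2_x+x^{m_2}u^2_y\bigr),
\end{align*}
and the Divergence-Theorem computations of Steps~2 and 3 applied to $F(u)(-(m_1+2)x,-(m_2+2)y)$ and to $u\,\mathcal{X}u$, hold over $\Omega_3$ with no change other than replacing $AC,BC,\sigma_1,\Omega_1$ by $AC'',BC'',\sigma_3,\Omega_3$. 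Because $u=0$ on $AC''\cup\sigma_3$ and $F(0)=0$, the Step~2 and Step~3 boundary terms are left only over $BC''$, and Step~4 --- multiply the Step~3 identity by $-\tfrac{m_1+m_2+m_1m_2}{2}$ and add it to the combination of Steps~1 and 2 --- delivers \eqref{PI3}, provided the Step~1 boundary integral over $AC''$ is shown to vanish. So the proof reduces to that single point.

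For that I need: on $AC''$, both $y^{m_1}u^2_x+x^{m_2}u^2_y$ and $\mathcal{X}u\cdot\eta_{AC''}$ are identically zero. I would first introduce the characteristic derivatives adapted to $\Omega_3$,
$$\partial_{\pm}u:=u_x\pm\frac{x^{m_2/2}}{(-y)^{m_1/2}}\,u_y.$$
The two families of characteristics of $\mathcal{O}$ in $\{x\ge 0,\ y\le 0\}$ solve $-y^{m_1}(y')^2=x^{m_2}$, i.e.\ have slopes $y'=\pm x^{m_2/2}/(-y)^{m_1/2}$; in particular, restricted to $AC''$, the operator $\partial_+u$ is the tangential derivative $\frac{d}{dx}u(x,y(x))$, so $u|_{AC''}\equiv 0$ forces $\partial_+u|_{AC''}\equiv 0$. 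Using that $m_1$ is odd, hence $(-y)^{m_1}=-y^{m_1}$, one computes
$$(\partial_+u)(\partial_-u)=u^2_x-\frac{x^{m_2}}{(-y)^{m_1}}u^2_y=-\frac{1}{(-y)^{m_1}}\bigl(y^{m_1}u^2_x+x^{m_2}u^2_y\bigr),$$
which gives $y^{m_1}u^2_x+x^{m_2}u^2_y\equiv 0$ on $AC''$.

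For the second quantity the key step is to simplify the fractional power in the formula \eqref{nAC3} for $\eta_{AC''}$. Along $AC''$ the defining relation \eqref{AC3} rearranges to $-\tfrac{m_1+2}{m_2+2}x^{(m_2+2)/2}+(-2y_0)^{(m_1+2)/2}=(-y)^{(m_1+2)/2}$, so the bracket in $\eta_{AC''}$ raised to the power $-m_1/(m_1+2)$ equals $(-y)^{-m_1/2}$. Plugging this into \eqref{nAC3} and again using $-y^{m_1}=(-y)^{m_1}$,
$$\mathcal{X}u\cdot\eta_{AC''}=(-y)^{m_1/2}x^{m_2/2}u_x+x^{m_2}u_y=(-y)^{m_1/2}x^{m_2/2}\,\partial_+u,$$
which is zero on $AC''$. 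These two facts kill the Step~1 boundary contribution over $AC''$, and \eqref{PI3} follows. Note that, unlike in Theorem \ref{T1}, no parity assumption on $m_2$ is needed, precisely because here the characteristic region has $x\ge 0$, so $x^{m_2/2}=\sqrt{x^{m_2}}$ is an unambiguous nonnegative real for every $m_2\in\N$.

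The hard part will be the identification $\mathcal{X}u\cdot\eta_{AC''}=(-y)^{m_1/2}x^{m_2/2}\,\partial_+u$, which rests on two things that must be handled carefully: the collapse of the exponent $-m_1/(m_1+2)$ using the equation of $AC''$, legitimate because $-y\ge 0$ along the arc; and the systematic use of the hypothesis that $m_1$ is odd to turn the sign-indefinite weight $-y^{m_1}$ into the nonnegative $(-y)^{m_1}$. Everything else is a faithful transcription of the proof of Theorem \ref{T1} with the arc and domain symbols updated.
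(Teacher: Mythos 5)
Your proposal is correct and follows essentially the same route as the paper, which proves Theorem \ref{T3} by rerunning the four-step argument of Theorem \ref{T1} and reducing everything to the vanishing of $(y^{m_1}u^2_x+x^{m_2}u^2_y)$ and $\mathcal{X}u\cdot\eta$ on $AC''$ via directional derivatives along the characteristics. Your explicit verification on $AC''$ (collapsing the bracket in \eqref{nAC3} to $(-y)^{-m_1/2}$ using \eqref{AC3}, and using $m_1$ odd so $-y^{m_1}=(-y)^{m_1}$) is exactly the detail the paper leaves as ``analogous,'' and it checks out.
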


The Proof of Theorems \ref{T2} and \ref{T3} are similar to the proof of the Theorem \ref{T1}. Just observe that, the Poho\^{z}aev-type identities are similar given in terms of $BC$, $BC'$ and $BC''$. Note that, $\mathcal{X}u$, the operator $\mathcal{O}u$ and the vector field $Du$ do not change to the problems \eqref{P1}, \eqref{P2} and \eqref{P3}, so $\omega_1$ and $\omega_2$ have the same expression independently of the problem. Moreover, the computations done in the Step 2, Step 3 and Step 4 of the Proof of Theorem \ref{T1} which remain valid  to the proof of Theorems \ref{T2} and \ref{T3}. But, the Step 1 of the proof of Theorem \ref{T1} depends strongly on the facts $(y^{m_1}u^2_x+x^{m_2}u^2_y)_{|_{AC}}\equiv0$ and $(\mathcal{X}u\cdot\eta)_{|_{AC}}\equiv0$. The proof of Theorems \ref{T2} and \ref{T3}, we need to define the directional derivatives of $u$ along the characteristic curves $AC'$, $BC'$, $AC''$ and $BC''$ and with these, we found $(y^{m_1}u^2_x+x^{m_2}u^2_y)_{|_{AC'}}\equiv0$, $(y^{m_1}u^2_x+x^{m_2}u^2_y)_{|_{AC''}}\equiv0$ and $(\mathcal{X}u\cdot\eta)_{|_{AC'}}\equiv0$, $(\mathcal{X}u\cdot\eta)_{|_{AC''}}\equiv0$ as done in the Step 1 of the proof of Theorem \ref{T1}. Therefore, we can affirmed that the Poho\^{z}aev-type identities are true to the proposed problems.


\section{Nonexistence results}\label{Sec4}

Together with the Poho\^{z}aev-type equations \eqref{PI1},  \eqref{PI2} and  \eqref{PI3} from Section \ref{Sec3}, the nonexistence results presented in this Section depend directly on the Hardy-Sobolev inequality because it controls the sign of the boundary integrals on the characteristic curves of $\Omega_1$, $\Omega_2$  and $\Omega_3$. The relationship between the critical growth of power type in nonlinearity and the critical exponent of the weighted Sobolev embedding in the Theorem \ref{IMER} is explicit in each of these nonexistence results. We define a class of weighted functions and a class of absolutely continuous functions where the Hardy-Sobolev inequality will be applied. This inequality can be seen as a weighted Sobolev inequality and the proof can be found in [\cite{opickufner}, Theorem 1.14]. Let,
$$\mathcal{W}=\left\{ w: [a,b]\rightarrow\mathbb{R}; w \, \, \mbox{is measurable, positive and finite a.e. in} \, \, [a,b]\right\}$$
and $\mathcal{AC}_L(a,b)$ defined by 
$$\left\{ \phi:[a,b]\rightarrow\mathbb{R} \,;\, \phi_{|J}\in\mathcal{AC}(J), \, \forall\, J=[c,d]\subset(a,b) \, \mbox{and} \lim_{x\rightarrow a^+}\phi(x)=0 \right\}.$$

\begin{lemma}\label{HSI}
(\textit{Hardy-Sobolev Inequality}). Let $1<p\leq q<+\infty$ and $v,w\in\mathcal{W}$ be given. Then
$$\left[\int_a^b|\phi(x)|^qw(x)dx \right]^\frac{1}{q}\leq C_L\left[\int_a^b|\phi'(x)|^pv(x)dx \right]^\frac{1}{p},$$
for all $\phi\in\mathcal{AC}_L(a,b)$, if and only if, $$M_L=\sup_{a<x<b}G_L(x)<+\infty\,,$$
where 
$$G_L(x):=\left(\int_x^bw(t)dt\right)^\frac{1}{q}\left(\int_a^xv^{1-p'}(t)dt\right)^\frac{1}{p'}.$$

Moreover, the best constant $C_L$ holds
$$M_L\leq C_L\leq r(p,q)M_L \, \, \, \mbox{where} \,\,\, r(p,q):=\left(1+\frac{q}{p'}\right)^\frac{1}{q}\left(1+\frac{p'}{q}\right)^\frac{1}{q'}.$$
\end{lemma}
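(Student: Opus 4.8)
The plan is to recognize Lemma~\ref{HSI} as the classical one-dimensional weighted Hardy inequality (for $p=q$ this is Muckenhoupt's theorem, for $p<q$ it is due to Bradley and Kokilashvili) and to prove the two implications separately while keeping track of constants, so as to obtain the two-sided bound $M_L\le C_L\le r(p,q)M_L$. A preliminary reduction helps: since every $\phi\in\mathcal{AC}_L(a,b)$ satisfies $|\phi(x)|\le\psi(x):=\int_a^x|\phi'(t)|\,dt$ with $\psi'=|\phi'|$ and $\psi\in\mathcal{AC}_L(a,b)$, it suffices to prove the inequality, and to test its sharpness, for nonnegative nondecreasing $\phi$ with $\phi'\ge 0$. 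Throughout I write $h:=v^{1-p'}$ and $H(x):=\int_a^x h(t)\,dt$, so that $G_L(x)=\big(\int_x^b w\big)^{1/q}H(x)^{1/p'}$; after exhausting $(a,b)$ by compact subintervals I may also assume $H(a^+)=0$ and $H$ finite on $(a,b)$, the degenerate cases $H(x_0)=+\infty$ or $\int_a^b w=+\infty$ being dealt with by the same exhaustion, noting that they force $M_L=+\infty$ unless $w$ (respectively $h$) vanishes on a suitable subinterval, in which case one restricts the interval.

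For necessity ($\Rightarrow$), fix $c\in(a,b)$ and insert the test function $\phi_c(x):=\min\{H(x),H(c)\}\in\mathcal{AC}_L(a,b)$ into the assumed inequality. Using the identity $(1-p')p+1=1-p'$ one computes $\big(\int_a^b|\phi_c'|^p v\big)^{1/p}=H(c)^{1/p}$, while $\phi_c\equiv H(c)$ on $[c,b]$ gives $\big(\int_a^b|\phi_c|^q w\big)^{1/q}\ge H(c)\big(\int_c^b w\big)^{1/q}$. Combining, $H(c)^{1/p'}\big(\int_c^b w\big)^{1/q}=G_L(c)\le C_L$; taking the supremum over $c\in(a,b)$ yields $M_L\le C_L<\infty$.

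For sufficiency ($\Leftarrow$), assume $M_L<\infty$ and (otherwise there is nothing to prove) $A:=\int_a^b|\phi'|^p v<\infty$. Pick a parameter $s\in(0,1)$ and split $\phi'=\big[\phi'\,v^{1/p}H^{s/p'}\big]\big[v^{-1/p}H^{-s/p'}\big]$; H\"older's inequality together with $v^{-p'/p}=h=H'$ and $\int_a^x H'H^{-s}\,dt=H(x)^{1-s}/(1-s)$ gives
\[
\phi(x)\le(1-s)^{-1/p'}\,H(x)^{(1-s)/p'}\Big(\int_a^x|\phi'|^p v\,H^{s(p-1)}\,dt\Big)^{1/p}.
\]
When $p=q$, raise this to the power $p$, integrate against $w$, apply Fubini, and use the elementary estimate $\int_t^b H^{(p-1)(1-s)}w\le s^{-1}M_L^p H(t)^{-s(p-1)}$, which follows from one integration by parts out of $\int_x^b w\le M_L^p H(x)^{1-p}$; the bound then collapses to $\int_a^b\phi^p w\le(1-s)^{-p/p'}s^{-1}M_L^p A$, and minimizing $(1-s)^{-p/p'}s^{-1}$ over $(0,1)$, which occurs at $s=p'/(p+p')$, gives exactly $C_L\le r(p,p)M_L$. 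For $p<q$ the Fubini step is blocked by the exponent $q/p>1$ on the inner integral, so one instead decomposes $(a,b)$ along the dyadic level sets $\{x:2^k\le H(x)\le 2^{k+1}\}$: on each such interval $\int w\le M_L^q2^{-kq/p'}$, and the increment of $\phi$ is controlled by H\"older on that interval, reducing matters to a weighted discrete Hardy inequality whose geometric-progression structure makes it summable; optimizing the analogous auxiliary exponent then produces $C_L\le r(p,q)M_L$ with $r(p,q)=(1+q/p')^{1/q}(1+p'/q)^{1/q'}$.

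The main obstacle is precisely the sufficiency direction with the sharp constant in the non-diagonal range $p<q$: the clean Fubini computation that settles $p=q$ no longer applies, so one must run the more delicate level-set (or iterated integration by parts) argument, summing the geometric series and optimizing with care. A secondary, purely technical nuisance running through the whole argument is the endpoint bookkeeping, namely ensuring $H(a^+)=0$ and handling $H(b^-)=+\infty$ or $\int_a^b w=+\infty$; this is dispatched by monotone exhaustion of $(a,b)$ by compact subintervals before any of the estimates above are carried out.
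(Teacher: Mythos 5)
First, note that the paper does not prove this lemma at all: it is quoted from the literature, with the proof deferred to Opic--Kufner, Theorem 1.14. So your attempt is measured against that reference rather than against an in-paper argument. Your necessity step is correct (the test function $\phi_c=\min\{H,H(c)\}$ with the exponent identity $(1-p')p+1=1-p'$ gives exactly $G_L(c)\le C_L$), and your sufficiency argument in the diagonal case $p=q$ is also correct: the H\"older split with the auxiliary power $H^{s/p'}$, the Fubini step, the integration-by-parts bound $\int_t^b H^{(1-s)(p-1)}w\le s^{-1}M_L^pH(t)^{-s(p-1)}$, and the optimization at $s=p'/(p+p')$ do yield $C_L\le p^{1/p}(p')^{1/p'}M_L=r(p,p)M_L$, which is all that the paper actually uses (Step 5 of its nonexistence theorems applies the lemma only with $p=q=2$, where $r(2,2)=2$).

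The genuine gap is the non-diagonal case $1<p<q$ with the stated constant. You correctly identify that Fubini is blocked, but the substitute you propose -- a dyadic decomposition along the level sets $\{2^k\le H\le 2^{k+1}\}$ reducing to a discrete Hardy inequality -- establishes at best the qualitative equivalence $C_L<\infty\iff M_L<\infty$; the geometric-series bookkeeping loses constants, and the assertion that ``optimizing the analogous auxiliary exponent then produces $C_L\le r(p,q)M_L$'' is not proved and is not something that scheme is known to deliver. The standard way to get precisely $r(p,q)=\bigl(1+\tfrac{q}{p'}\bigr)^{1/q}\bigl(1+\tfrac{p'}{q}\bigr)^{1/q'}$ is to keep your H\"older-with-parameter setup, raise to the power $q$, and replace Fubini by Minkowski's integral inequality (legitimate since $q/p\ge1$) applied to the inner integral $\int_a^x|\phi'|^pvH^{s(p-1)}dt$ against the measure $H^{(1-s)q/p'}w\,dx$; the same integration-by-parts estimate on $\int_t^bH^{(1-s)q/p'}w$ then makes the $H(t)$ powers cancel, and optimizing the parameter gives exactly $r(p,q)$. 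This is the proof of Opic--Kufner, Theorem 1.14, i.e., the argument the paper is implicitly relying on; as written, your proposal proves the lemma only for $p=q$ and the characterization (without the sharp upper constant) for $p<q$.
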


\begin{theorem}\label{T4}
Let $\Omega_1$ be a Tricomi domain for the operator $\mathcal{O}$, which is $D$-star-shaped where $D=-(m_1+2)x\partial_x-(m_2+2)y\partial_y$ with $m_1$ odd and $\frac{m_2}{2}$ even numbers. Let $u\in C^2(\overline{\Omega_1})$ be a regular solution of 

\begin{equation}\label{P1+}
 \begin{cases}
\mathcal{O}(u):=-y^{m_1}u_{xx}-x^{m_2}u_{yy}=u|u|^{\alpha-1} &\mbox{in} \ \ \Omega_1, \\
\quad \ \ \;u=0 &\mbox{on} \ \ AC\cup\sigma_1\subseteq\partial\Omega_1,
\end{cases}
\end{equation}
with $\alpha>2^*(m_1,m_2)-1=\frac{m_1+m_2-m_1m_2+8}{m_1+m_2+m_1m_2}$. Then $u\equiv0$ a.e. in $\Omega_1$.
\end{theorem}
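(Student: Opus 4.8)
The plan is to combine the Poho\v{z}aev-type identity from Theorem \ref{T1} with the Hardy-Sobolev inequality (Lemma \ref{HSI}) and the $D$-starlike property of $\partial\Omega_1$. First I would substitute $f(u)=u|u|^{\alpha-1}$, so that $F(u)=\frac{1}{\alpha+1}|u|^{\alpha+1}$ and $uf(u)=|u|^{\alpha+1}$. Plugging these into \eqref{PI1}, the left-hand side becomes
\begin{equation*}
\left[\frac{m_1+m_2+4}{\alpha+1}-\frac{m_1+m_2+m_1m_2}{2}\right]\int_{\Omega_1}|u|^{\alpha+1},
\end{equation*}
and one checks that the bracket is strictly negative precisely when $\alpha>2^*(m_1,m_2)-1=\frac{m_1+m_2-m_1m_2+8}{m_1+m_2+m_1m_2}$ (this is the same computation as in Proposition \ref{IMER}, rearranged). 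So the left-hand side of the identity is $\le 0$, with equality only if $u\equiv 0$. It then suffices to show the right-hand side, namely $\frac12\left[\int_{BC\cup\sigma_1}\omega_1+\int_{BC}\omega_2\right]$, is $\ge 0$.

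Next I would split the boundary terms. On the elliptical arc $\sigma_1$, since $u$ vanishes there, the tangential derivative of $u$ along $\sigma_1$ vanishes, so only the normal derivative survives; writing $u_x,u_y$ in terms of $\partial u/\partial\eta$ one finds $\omega_1\big|_{\sigma_1}$ equals $(y^{m_1}u_x^2+x^{m_2}u_y^2)\,(-(m_1+2)x,-(m_2+2)y)\cdot\eta$, which is a nonnegative weight times the starlike quantity $((m_1+2)x,(m_2+2)y)\cdot\eta\ge 0$ guaranteed by Lemma 1 applied with $\alpha=m_1+2$, $\beta=m_2+2$ (here $y>0$ on $\sigma_1$ makes $y^{m_1}\ge 0$, and $x^{m_2}\ge 0$ since $m_2$ is even; the sign bookkeeping on the $2Du\,\mathcal{X}u\cdot\eta$ piece is handled by the vanishing of the tangential derivative exactly as in the classical Poho\v{z}aev argument). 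On the characteristic curve $BC$, the remaining integrand $\frac12(\omega_1+\omega_2)$ must be bounded below; here one uses the explicit normal vector \eqref{nBC1}, and the fact (established in Step 1 of Theorem \ref{T1}) that $\partial_+u$ and $\partial_-u$ are the derivatives along $BC$ and $AC$. Only the derivative transverse to $BC$ appears, and after inserting the parametrization \eqref{BC1} of $BC$ in the variable $y$, the $BC$-contribution reduces to a one-dimensional integral of the form $\int_{y_c}^{0}\big(c_1 w_1(y)(\partial_\nu u)^2 - c_2 w_2(y) u^2\big)\,dy$ type expression, for explicit positive weights and constants determined by $m_1,m_2,x_0$.

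The main obstacle is controlling this $BC$ boundary integral: one needs the negative $u^2$-term on $BC$ to be dominated by the positive gradient term, and this is exactly where the Hardy-Sobolev inequality Lemma \ref{HSI} enters. I would apply Lemma \ref{HSI} on the interval $[y_c,0]$ with $\phi(y)=u(x(y),y)$ along $BC$ (which lies in $\mathcal{AC}_L$ because $u(C)=0$ forces $\phi(y_c)=0$, after orienting the interval appropriately so the vanishing endpoint is the left one), with $p=q=2$ and with weights $v,w\in\mathcal{W}$ read off from \eqref{BC1} and \eqref{nBC1} so that $\int w\,\phi^2$ is the $u^2$-term and $\int v\,(\phi')^2$ bounds the gradient term. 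The crux is then the finiteness of $M_L=\sup G_L$ and, more importantly, a quantitative bound $C_L^2\le c_2^{-1}c_1$ (or better, $M_L$ small enough) ensuring the gradient term absorbs the $u^2$ term; this is where the precise exponents $\frac{m_1+2}{2}$, $\frac{m_2+2}{2}$ in the characteristic curves and the value of $2^*(m_1,m_2)$ must conspire correctly, and checking this explicitly — tracking the powers of $(-y)$ through $G_L(y)=\big(\int_y^0 w\big)^{1/2}\big(\int_{y_c}^y v^{-1}\big)^{1/2}$ — is the delicate computation. Once the $BC$ integral is shown to be $\ge 0$, we conclude the right-hand side of \eqref{PI1} is $\ge 0$ while the left-hand side is $\le 0$, forcing $\int_{\Omega_1}|u|^{\alpha+1}=0$, hence $u\equiv 0$ a.e. in $\Omega_1$.
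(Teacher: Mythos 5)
Your proposal is correct and follows essentially the same route as the paper's proof: insert $f(u)=u|u|^{\alpha-1}$ into the identity of Theorem \ref{T1}, observe that the coefficient of $\int_{\Omega_1}|u|^{\alpha+1}$ is negative exactly when $\alpha>2^*(m_1,m_2)-1$, obtain $\int_{\sigma_1}\omega_1\geq 0$ from the $D$-starlike condition together with the vanishing of the tangential derivative (only note that after combining the $2Du\,\mathcal{X}u\cdot\eta$ piece with the quadratic piece the surviving integrand is $(y^{m_1}u_x^2+x^{m_2}u_y^2)\,((m_1+2)x,(m_2+2)y)\cdot\eta$, i.e.\ with the sign that makes it nonnegative, rather than the expression with $(-(m_1+2)x,-(m_2+2)y)$ as you wrote it), and reduce $\int_{BC}(\omega_1+\omega_2)$ to a one-dimensional integral along $BC$ exactly as in the paper's Steps 4--5. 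The single computation you leave open closes precisely as you anticipate: after integrating the cross term $\phi\phi'$ by parts using $\phi(y_c)=\phi(0)=0$, Lemma \ref{HSI} applied with $p=q=2$, $v(t)=(-t)^{\frac{m_1+2m_2+m_1m_2+2}{m_2+2}}$ and $w(t)=(-t)^{\frac{m_1+m_1m_2-2}{m_2+2}}$ gives $M_L=\frac{m_2+2}{m_1+m_2+m_1m_2}$, and since $r(2,2)=2$ the inequality holds with constant $C_L=\frac{2(m_2+2)}{m_1+m_2+m_1m_2}$, which is exactly the constant needed to absorb the $\phi^2$ term, so $I\geq 0$ and hence $u\equiv 0$ a.e.\ in $\Omega_1$.
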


\begin{proof}
The proof will be done in 5 steps. Step 1, we will analyzed $f(u)$ and $F(u)$. Step 2, we apply the Poho\^{z}aev-type equation (Theorem \ref{T1}). Step 3, we will check that $\int_{\sigma_1}\omega_1\geq0$. Step 4, we will rewrite $\int_{BC}(\omega_1+\omega_2)$ by a parameterization. Finally, Step 5, we apply Lemma \ref{HSI} to prove that  $\int_{BC}(\omega_1+\omega_2)\geq0$.

\vspace{0,3cm}

\textbf{Step 1.} The problem \eqref{P1+} is a particular case of the problem \eqref{P1} where $f(u)=u|u|^{\alpha-1}$. Which implies, $F(u)=\frac{|u|^{\alpha+1}}{\alpha+1}$.

\vspace{0,3cm}

\textbf{Step 2.} Applying the Poho\^{z}aev-type equation, Theorem \ref{T1}, we get

\begin{align*}
    (m_1+m_2+4)\int_{\Omega_1} \frac{|u|^{\alpha+1}}{\alpha+1}-\frac{m_1+m_2+m_1m_2}{2}&\int_{\Omega_1} |u|^{\alpha+1}= \nonumber\\ &\frac{1}{2}\left[\int_{BC\cup\sigma_1}\omega_1+\int_{BC}\omega_2\right].
\end{align*}

So, we obtain
\begin{align*}
 \left(\frac{(m_1+m_2+4)}{\alpha+1}-\frac{m_1+m_2+m_1m_2}{2}\right)\int_{\Omega_1} |u|^{\alpha+1}= \frac{1}{2}\left[\int_{BC\cup\sigma_1}\omega_1+\int_{BC}\omega_2\right],
\end{align*}
 and 
 
\begin{align*}
 \left(\frac{m_1+m_2-m_1m_2+8-\alpha(m_1+m_2+m_1m_2)}{2(\alpha+1)}\right)&\int_{\Omega_1} |u|^{\alpha+1}=\\ &\frac{1}{2}\left[\int_{BC\cup\sigma_1}\omega_1+\int_{BC}\omega_2\right].
\end{align*}
 
 In the next Steps, we will prove that 
$$\frac{1}{2}\left[\int_{BC\cup\sigma_1}\omega_1+\int_{BC}\omega_2\right]\geq0.$$
Since $|u|^{\alpha+1}\geq 0$, then follow the result.

\vspace{0,3cm}
\textbf{Step 3.} By \eqref{w1}, we obtain 
\begin{align*}
    \int_{\sigma_1}\omega_1ds=&\int_{\sigma_1}\Bigl[2Du(-y^{m_1}u_x,-x^{m_2}u_y) \nonumber\\ &+(y^{m_1}u^2_x+x^{m_2}u^2_y)(-(m_1+2)x,-(m_2+2)y)\Bigl]\cdot\eta_{\sigma_1} ds \, ,
\end{align*}
since $Du=-(m_1+2)x\,u_x-(m_2+2)y\,u_y$ and $\eta_{\sigma_1} ds=(-dy,dx)$, then
\begin{align*}
    \int_{\sigma_1}\omega_1ds=&\int_{\sigma_1}\Bigl[(-2(m_1+2)x\,u_x-2(m_2+2)y\,u_y)(-y^{m_1}u_x)\\
    &+(y^{m_1}u^2_x+x^{m_2}u^2_y)(-(m_1+2)x) \Bigl](-dy) \\ &+\int_{\sigma_1}\Bigl[(-2(m_1+2)x\,u_x-2(m_2+2)y\,u_y)(-x^{m_2}u_y)\\
    &+(y^{m_1}u^2_x+x^{m_2}u^2_y)(-(m_2+2)y)\Bigl]dx \, .
\end{align*}

 Thus, we get 
\begin{align*}
    \int_{\sigma_1}\omega_1ds=&\int_{\sigma_1}(y^{m_1}u^2_x+x^{m_2}u^2_y)((m_1+2)xdy-(m_2+2)ydx)\\
    &+\int_{\sigma_1}(2(m_1+2)xx^{m_2}\,u_y-2(m_2+2)yy^{m_1}\,u_x)(u_xdx+u_ydy).
\end{align*}

By hypotheses $u_{|\sigma_1}\equiv0$ then $(u_xdx+u_ydy)_{|\sigma_1}=0$. So, we have
\begin{align*}
    \int_{\sigma_1}\omega_1ds=\int_{\sigma_1}(y^{m_1}u^2_x+x^{m_2}u^2_y)((m_1+2)xdy-(m_2+2)ydx).
\end{align*}

We know that $\Omega_1$ is $D$-star-shaped and $\partial\Omega_1$ is $D$-starlike in the sense of Definition \ref{OPIE}. Therefore, $((m_1+2)xdy-(m_2+2)ydx)\geq0$. See well, $(y^{m_1}u^2_x+x^{m_2}u^2_y)_{|\sigma_1}\geq0$ because $\sigma_1\subset\{(x,y)\in\mathbb{R}^2 ;\, y>0 \}$ and $m_2$ is even. In conclusion,
\begin{align*}
    \int_{\sigma_1}\omega_1ds\geq0.
\end{align*}

\textbf{Step 4.} Denote by 
$$I=\int_{BC}(\omega_1+\omega_2).$$

We know that $\omega_1$ and $\omega_2$ are defined by \eqref{w1} and \eqref{w2} respectively.  Therefore, the generated flow by $D=-(m_1+2)x\partial_x-(m_2+2)y\partial_y$ is everywhere tangential to $BC$. Then, we obtain  
\begin{align}\label{I}
    I=\int_{BC}[2Du-(m_1+m_2&+m_1m_2)u](-y^{m_1}u_x,-x^{m_2}u_y)\cdot\eta\, ds \, .
\end{align}

Remembering, $BC$ is given by $-\frac{m_2+2}{m_1+2}(-y)^\frac{m_1+2}{2}=x^\frac{m_2+2}{2}$, we get on $BC$
\begin{align}\label{ds}
    ds=|\eta_{BC}|dy \ \ \mbox{and} \ \ Du=(m_2+2)(-y)\partial_+u ,
\end{align}
where $\partial_+u=u_y+x^{-\frac{m_2}{2}}(-y)^\frac{m_1}{2}u_x$ as defined in the Step 1 of the Theorem \ref{T1}.

\vspace{0.3cm}
Inserting \eqref{ds} and \eqref{nBC1} in \eqref{I}, we have
\begin{align*}
    I=\int^0_{y_c}[2(m_2+2)(-y)&\partial_+u-(m_1+m_2+m_1m_2)u]\\
    &(-y^{m_1}u_x,-x^{m_2}u_y)\cdot\frac{(1,-x^{-\frac{m_2}{2}}(-y)^\frac{m_1}{2})}{|\eta_{BC}|}|\eta_{BC}|dy \, ,
\end{align*}
so, 
\begin{align*}
    I=\int^0_{y_c}[2(m_2+2)(-y)\partial_+u-(m_1+m_2+m_1m_2)u](-y)^\frac{m_1}{2}x^\frac{m_2}{2}\partial_+u\, dy \, .
\end{align*}

Then, 
\begin{align}\label{I2}
    I=\int^0_{y_c}[2(m_2+2)&(-y)^\frac{m_1+2}{2}x^\frac{m_2}{2}(\partial_+u)^2  
    -(m_1+m_2+m_1m_2)(-y)^\frac{m_1}{2}x^\frac{m_2}{2}u \, \partial_+u] \, dy \, .
\end{align}

Parameterizing the curve $BC$ by 
\begin{align}\label{parame}
    \Gamma(t)=\left( \Bigl(-\frac{m_2+2}{m_1+2}(-t)^\frac{m_1+2}{2}\Bigl)^\frac{2}{m_2+2} , t\right) , \quad \quad t\in [y_c,0] \, ,
\end{align}
define $\phi(t)=u(\Gamma(t))\in C^2((y_c,0))\cup C^1([y_c,0])$ and note that 
\begin{align}\label{phi'}
    \phi'(t)=\nabla u\cdot\Gamma'(t)=\partial_+u(\Gamma(t)).
\end{align}

From \eqref{parame} and \eqref{phi'}, \eqref{I2} becomes
\begin{align}\label{Iphi'phi}
     I=&\int^0_{y_c}\Biggl[2(m_2+2)(-t)^\frac{m_1+2}{2}\Bigl(-\frac{m_2+2}{m_1+2}(-t)^\frac{m_1+2}{2}\Bigl)^\frac{m_2}{m_2+2}(\phi'(t))^2 \nonumber\\ 
    &-(m_1+m_2+m_1m_2)(-t)^\frac{m_1}{2}\Bigl(-\frac{m_2+2}{m_1+2}(-t)^\frac{m_1+2}{2}\Bigl)^\frac{m_2}{m_2+2}\phi(t)\phi'(t)\Biggl] \, dt \, .
\end{align}

See that, 
\begin{align*}
    (-t)^\frac{m_1+2}{2}\Bigl(-\frac{m_2+2}{m_1+2}(-t)^\frac{m_1+2}{2}\Bigl)^\frac{m_2}{m_2+2}=\Bigl(-\frac{m_2+2}{m_1+2}\Bigl)^\frac{m_2}{m_2+2}(-t)^\frac{m_1+2m_2+m_1m_2+2}{m_2+2},
\end{align*}
and
\begin{align*}
    (-t)^\frac{m_1}{2}\Bigl(-\frac{m_2+2}{m_1+2}(-t)^\frac{m_1+2}{2}\Bigl)^\frac{m_2}{m_2+2}=\Bigl(-\frac{m_2+2}{m_1+2}\Bigl)^\frac{m_2}{m_2+2}(-t)^\frac{m_1+m_2+m_1m_2}{m_2+2}.
\end{align*}
Furthermore,
\begin{align*}
    &\frac{d}{dt}\Biggl[\Bigl(-\frac{m_2+2}{m_1+2}\Bigl)^\frac{m_2}{m_2+2}(-t)^\frac{m_1+m_2+m_1m_2}{m_2+2}(\phi(t))^2\Biggl]=\\
    &\Bigl(-\frac{m_2+2}{m_1+2}\Bigl)^\frac{m_2}{m_2+2} \frac{d}{dt}\Biggl[(-t)^\frac{m_1+m_2+m_1m_2}{m_2+2}(\phi(t))^2\Biggl]=\\
    &-\Bigl(-\frac{m_2+2}{m_1+2}\Bigl)^\frac{m_2}{m_2+2}\Bigl(\frac{m_1+m_2+m_1m_2}{m_2+2}\Bigl)(-t)^\frac{m_1+m_1m_2-2}{m_2+2}(\phi(t))^2\\
    &+2\Bigl(-\frac{m_2+2}{m_1+2}\Bigl)^\frac{m_2}{m_2+2}(-t)^\frac{m_1+m_2+m_1m_2}{m_2+2}\phi(t)\phi'(t),
\end{align*}
then, 
\begin{align}\label{phiphi'phi2}
    &\Bigl(-\frac{m_2+2}{m_1+2}\Bigl)^\frac{m_2}{m_2+2}(-t)^\frac{m_1+m_2+m_1m_2}{m_2+2}\phi(t)\phi'(t)=\nonumber\\ 
    &\frac{1}{2}\frac{d}{dt}\Biggl[\Bigl(-\frac{m_2+2}{m_1+2}\Bigl)^\frac{m_2}{m_2+2}(-t)^\frac{m_1+m_2+m_1m_2}{m_2+2}(\phi(t))^2\Biggl]\nonumber\\
    &+\frac{1}{2}\Bigl(-\frac{m_2+2}{m_1+2}\Bigl)^\frac{m_2}{m_2+2}\Bigl(-\frac{m_1+m_2+m_1m_2}{m_2+2}\Bigl)(-t)^\frac{m_1+m_1m_2-2}{m_2+2}(\phi(t))^2.
\end{align}

Inserting \eqref{phiphi'phi2} in \eqref{Iphi'phi} and using the fact that $\phi(y_c)=0=\phi(0)$, we find 
\begin{align}\label{IF}
    I=&2(m_2+2)\Bigl(-\frac{m_2+2}{m_1+2}\Bigl)^\frac{m_2}{m_2+2}\int^0_{y_c}(-t)^\frac{m_1+2m_2+m_1m_2+2}{m_2+2}(\phi'(t))^2\, dt \nonumber\\ 
    &-\frac{1}{2}\frac{(m_1+m_2+m_1m_2)^2}{m_2+2}\Bigl(-\frac{m_2+2}{m_1+2}\Bigl)^\frac{m_2}{m_2+2}\int^0_{y_c}(-t)^\frac{m_1+m_1m_2-2}{m_2+2}(\phi(t))^2 \, dt \, .
\end{align}

\textbf{Step 5.} Claim. $I=\int_{BC}(\omega_1+\omega_2)\geq0$. This is,  the equation \eqref{IF} is nonnegative. Just see, 
\begin{align*}
    &\frac{1}{2}\frac{(m_1+m_2+m_1m_2)^2}{m_2+2}\Bigl(-\frac{m_2+2}{m_1+2}\Bigl)^\frac{m_2}{m_2+2}\int^0_{y_c}(-t)^\frac{m_1+m_1m_2-2}{m_2+2}(\phi(t))^2 \, dt \nonumber\\ 
    &\leq 2(m_2+2)\Bigl(-\frac{m_2+2}{m_1+2}\Bigl)^\frac{m_2}{m_2+2}\int^0_{y_c}(-t)^\frac{m_1+2m_2+m_1m_2+2}{m_2+2}(\phi'(t))^2\, dt \, ,
\end{align*}
simplifying,
\begin{align*}
    &\frac{1}{2}\frac{(m_1+m_2+m_1m_2)^2}{m_2+2}\int^0_{y_c}(-t)^\frac{m_1+m_1m_2-2}{m_2+2}(\phi(t))^2 \, dt \nonumber\\ 
    &\leq 2(m_2+2)\int^0_{y_c}(-t)^\frac{m_1+2m_2+m_1m_2+2}{m_2+2}(\phi'(t))^2\, dt \, ,
\end{align*}
equivalent to say, 
\begin{align}
    &\Biggl[\int^0_{y_c}(-t)^\frac{m_1+m_1m_2-2}{m_2+2}(\phi(t))^2 \, dt\Biggl]^\frac{1}{2} \nonumber\\ 
    &\leq \frac{2(m_2+2)}{m_1+m_2+m_1m_2}\Biggl[\int^0_{y_c}(-t)^\frac{m_1+2m_2+m_1m_2+2}{m_2+2}(\phi'(t))^2\, dt\Biggl]^\frac{1}{2} \, .
\end{align}

 For the function $\phi\in\mathcal{AC}_L(y_c,0)$, we use the Lemma \ref{HSI} the Hardy-Sobolev Inequality, with constant $C_L=\frac{2(m_2+2)}{m_1+m_2+m_1m_2}$ in the interval $(a,b)=(y_c,0)$, with exponents $p=q=p'=2$ and with weighted functions $v(t):=(-t)^\frac{m_1+2m_2+m_1m_2+2}{m_2+2}$ and $w(t)=(-t)^\frac{m_1+m_1m_2-2}{m_2+2}$.  Note that,
\begin{align*}
    G_L(x)=\Biggl[\int^0_x (-t)^\frac{m_1+m_1m_2-2}{m_2+2} dt\Biggl]^\frac{1}{2} \Biggl[\int^x_{y_c}(-t)^{-\frac{m_1+2m_2+m_1m_2+2}{m_2+2}}  dt\Biggl]^\frac{1}{2}\cdot
\end{align*}

Evaluating the integrals,
\begin{align*}
    G_L(x)=\frac{m_2+2}{m_1+m_2+m_1m_2}\Biggl[1-(-y_c)^{-\frac{m_1+m_2+m_1m_2}{m_2+2}}(-x)^\frac{m_1+m_2+m_1m_2}{m_2+2} \Biggl]^\frac{1}{2}.
\end{align*}

Thus,
\begin{align*}
    M_L=\sup_{a<x<b}G_L(x)=\frac{m_2+2}{m_1+m_2+m_1m_2}<+\infty\,,
\end{align*}
and $r(2,2)=2$. Therefore, 
$$M_L\leq C_L=\frac{2(m_2+2)}{m_1+m_2+m_1m_2}\leq r(2,2)M_L=\frac{2(m_2+2)}{m_1+m_2+m_1m_2}\cdot$$

In conclusion, $I=\int_{BC}(\omega_1+\omega_2)\geq0$ and, as mentioned in Step 2 the result follows. \end{proof}

\begin{theorem}\label{T5}
Let $\Omega_2$ be a Tricomi domain for the operator $\mathcal{O}$, which is $D$-star-shaped where $D=-(m_1+2)x\partial_x-(m_2+2)y\partial_y$ with $m_1$ odd and $\frac{m_2}{2}\in\mathbb{N}$. Let $u\in C^2(\overline{\Omega_2})$ be a regular solution of 

\begin{equation}
 \begin{cases}
\mathcal{O}(u):=-y^{m_1}u_{xx}-x^{m_2}u_{yy}=u|u|^{\alpha-1} \,\mbox{in} \quad\Omega_2, 
\\ \quad \quad u=0 \quad \quad \mbox{on} \quad AC\cup\sigma_2\subseteq\partial\Omega_2,
\end{cases}
\end{equation}
with $\alpha>2^*(m_1,m_2)-1=\frac{m_1+m_2-m_1m_2+8}{m_1+m_2+m_1m_2}$. Then $u\equiv0$ a.e. in $\Omega_2$.
\end{theorem}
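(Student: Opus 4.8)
The plan is to run, step by step, the proof of Theorem \ref{T4}, with the Pohožaev-type identity \eqref{PI2} of Theorem \ref{T2} playing the role of \eqref{PI1} and with the curves $\sigma_2$, $BC'$, $AC'$ of $\Omega_2$ replacing $\sigma_1$, $BC$, $AC$. Taking $f(u)=u|u|^{\alpha-1}$, so that $F(u)=\frac{|u|^{\alpha+1}}{\alpha+1}$, identity \eqref{PI2} becomes
\[
\left(\frac{m_1+m_2-m_1m_2+8-\alpha(m_1+m_2+m_1m_2)}{2(\alpha+1)}\right)\int_{\Omega_2}|u|^{\alpha+1}=\frac{1}{2}\left[\int_{BC'\cup\sigma_2}\omega_1+\int_{BC'}\omega_2\right].
\]
Since $\alpha>2^*(m_1,m_2)-1=\frac{m_1+m_2-m_1m_2+8}{m_1+m_2+m_1m_2}$ (and $m_1+m_2+m_1m_2>0$ because $m_1$ is odd), the coefficient on the left is strictly negative; hence it suffices to prove that $\int_{BC'\cup\sigma_2}\omega_1+\int_{BC'}\omega_2\geq0$, which then forces $\int_{\Omega_2}|u|^{\alpha+1}=0$ and so $u\equiv0$ a.e. in $\Omega_2$.

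First I would treat the arc $\sigma_2$ exactly as in Step 3 of Theorem \ref{T4}: the boundary condition $u|_{\sigma_2}\equiv0$ kills the tangential-gradient term and leaves $\int_{\sigma_2}\omega_1=\int_{\sigma_2}(y^{m_1}u_x^2+x^{m_2}u_y^2)\big((m_1+2)x\,dy-(m_2+2)y\,dx\big)$, which is $\geq0$ because $\Omega_2$ is $D$-star-shaped so $\partial\Omega_2$ is $D$-starlike (Definition \ref{OPIE}), and because on $\sigma_2\subset\{y>0\}$ one has $y^{m_1}>0$ ($m_1$ odd) and $x^{m_2}\geq0$ ($m_2$ even, which is exactly what $\frac{m_2}{2}\in\mathbb{N}$ provides). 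Next, the characteristic $BC'$ passes through the origin and is invariant under the nonhomogeneous dilations $\phi_\lambda$, so the field $D$ is tangent to it and the terms of $\omega_1,\omega_2$ that pair the vector $(-(m_1+2)x,-(m_2+2)y)$ with $\eta$ vanish on $BC'$; thus $I:=\int_{BC'}(\omega_1+\omega_2)$ reduces to $\int_{BC'}\big[2Du-(m_1+m_2+m_1m_2)u\big]\,(-y^{m_1}u_x,-x^{m_2}u_y)\cdot\eta\,ds$.

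The substance is the analogue of Steps 4--5. I would parameterize $BC'$ by $y$ via its equation $\frac{m_2+2}{m_1+2}(-y)^{\frac{m_1+2}{2}}=x^{\frac{m_2+2}{2}}$ and use the exterior normal \eqref{nBC2}; a direct check shows that on $BC'$ one has $(-y^{m_1}u_x,-x^{m_2}u_y)\cdot\eta_{BC'}=(-y)^{\frac{m_1}{2}}x^{\frac{m_2}{2}}\,\partial_-u$ and $Du=(m_2+2)(-y)\,\partial_-u$, where $\partial_-u:=u_y-x^{-\frac{m_2}{2}}(-y)^{\frac{m_1}{2}}u_x$ is precisely the derivative of $u$ along $BC'$ --- note the minus sign, opposite to the $\partial_+u$ that occurs on the curve $BC$ of $\Omega_1$. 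Writing $\phi(t)=u(\Gamma(t))$ for the parameterization $\Gamma$ of $BC'$, so that $\phi'(t)=\partial_-u(\Gamma(t))$ and $\phi(y_{c'})=\phi(0)=0$, the same integration by parts as in Theorem \ref{T4} turns $I$ into
\[
I=c_1\int_{y_{c'}}^{0}(-t)^{\frac{m_1+2m_2+m_1m_2+2}{m_2+2}}(\phi'(t))^2\,dt-c_2\int_{y_{c'}}^{0}(-t)^{\frac{m_1+m_1m_2-2}{m_2+2}}(\phi(t))^2\,dt,
\]
with $c_1,c_2>0$; since the magnitude relation between $x$ and $y$ on $BC'$ is identical to the one on $BC$, the weights, the exponents and the ratio $c_2/c_1$ coincide with those in \eqref{IF}. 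One then applies the Hardy--Sobolev inequality (Lemma \ref{HSI}) on $(y_{c'},0)$ with $p=q=p'=2$, weights $v(t)=(-t)^{\frac{m_1+2m_2+m_1m_2+2}{m_2+2}}$ and $w(t)=(-t)^{\frac{m_1+m_1m_2-2}{m_2+2}}$, for which $M_L=\frac{m_2+2}{m_1+m_2+m_1m_2}$ and the admissible constant $C_L=\frac{2(m_2+2)}{m_1+m_2+m_1m_2}=r(2,2)M_L$, to conclude $I\geq0$. Combining the three pieces gives the required nonnegativity and finishes the proof. The main obstacle --- as anticipated in the remark following Theorem \ref{T3} --- is the bookkeeping of the directional derivative along $BC'$ (and, inside Theorem \ref{T2}, along $AC'$): because the equation of $BC'$ differs in sign from that of $BC$, one must verify carefully that $Du$ is proportional to $\partial_-u$ rather than $\partial_+u$ and that the pairing with \eqref{nBC2} produces exactly the factor $(-y)^{\frac{m_1}{2}}x^{\frac{m_2}{2}}\partial_-u$; once that is settled, all the constants are forced to match the $\Omega_1$ case and the Hardy--Sobolev step goes through verbatim.
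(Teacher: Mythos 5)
Your proposal is correct and follows essentially the same route as the paper, which proves Theorem \ref{T5} precisely by repeating the five steps of Theorem \ref{T4} with the identity \eqref{PI2}, the arc $\sigma_2$, and the characteristic $BC'$ (whose equation carries the positive coefficient $\frac{m_2+2}{m_1+2}$, so the prefactors become $\bigl(\frac{m_2+2}{m_1+2}\bigr)^{\frac{m_2}{m_2+2}}$ but the weights, exponents and the Hardy--Sobolev constant are unchanged). Your careful verification that on $BC'$ the pairing with $\eta_{BC'}$ and the field $D$ produce $\partial_-u$ instead of $\partial_+u$ is exactly the point the paper leaves implicit in its ``analogous'' Step 4, and it is checked correctly.
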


\begin{proof}
The proof will be analogous to the proof of the Theorem \ref{T4}. 
\vspace{0,3cm}

\textbf{Step 1.} The same Step 1 of the Theorem \ref{T4}.
\vspace{0,3cm}

\textbf{Step 2.} Applying the Poho\^{z}aev-type identity \eqref{PI2}, we gain
\begin{align*}
    (m_1+m_2+4)\int_{\Omega_2} F(u)-\frac{m_1+m_2+m_1m_2}{2}&\int_{\Omega_2} uf(u)= \nonumber\\ &\frac{1}{2}\left[\int_{BC'\cup\sigma_2}\omega_1+\int_{BC'}\omega_2\right].
\end{align*}

\textbf{Step 3.} Analogous to Step 3 of the Theorem \ref{T4}, we have
\begin{align*}
 \int_{\sigma_2}\omega_1ds\geq 0.
\end{align*}

\textbf{Step 4.} Analogous to Step 4 of the Theorem \ref{T4}, we get 
\begin{align*}
    I=&2(m_2+2)\Bigl(\frac{m_2+2}{m_1+2}\Bigl)^\frac{m_2}{m_2+2}\int^0_{y_{c'}}(-t)^\frac{m_1+2m_2+m_1m_2+2}{m_2+2}(\phi'(t))^2\, dt \nonumber\\ 
    &-\frac{1}{2}\frac{(m_1+m_2+m_1m_2)^2}{m_2+2}\Bigl(\frac{m_2+2}{m_1+2}\Bigl)^\frac{m_2}{m_2+2}\int^0_{y_{c'}}(-t)^\frac{m_1+m_1m_2-2}{m_2+2}(\phi(t))^2 \, dt \, ,
\end{align*}

\textbf{Step 5.} The same Step 5. of the Theorem \ref{T4}.
\end{proof}

The following result arises from the difficulties mentioned in the introduction and it can be seen as a weak result of a possible generalization which does not have a final conclusion.  We do not have a similar result to the Theorems \ref{T4} and \ref{T5} on $\Omega_3$ for the Problem \eqref{P3}. But, we can consider the characteristic triangle $\Omega_4$ as being $\Omega_3$ for the Problem \eqref{P3} with $\sigma_3=\{(x,y)\in\mathbb{R}^2; \, x=0 \, \, \mbox{and} \, \, 2y_0 \leq y\leq 0 \}$. See figure \ref{TrianCarac}. 

\begin{figure}[!ht]
    \centering
    \includegraphics[width=7.5cm,angle=-90]{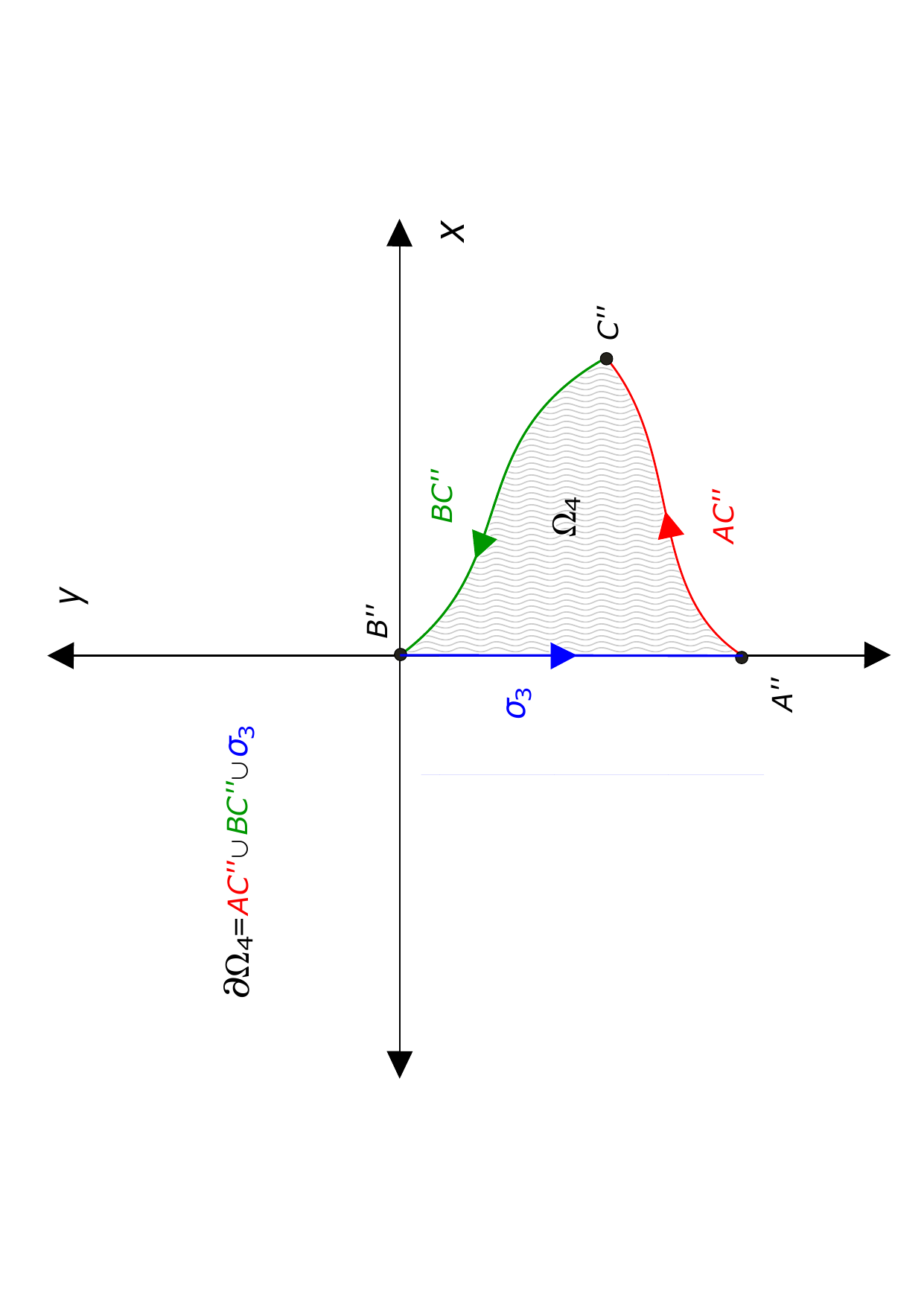}
    \caption{Domain $\Omega_4$ for the problem \eqref{P3}.}
    \label{TrianCarac}
\end{figure}

This idea comes from considering $\sigma_1=\{(x,y)\in\mathbb{R}^2; \, y=0 \, \, \mbox {and} \, 2x_0 \leq x\leq 0 \}$ for the Problem \eqref{P1} and $\sigma_2=\{(x,y)\in\mathbb{R}^2; \, y=0 \, \, \mbox {and} \, \, 0\leq x \leq 2x_0 \}$ for the Problem \eqref{P2} so, we could obtain two corollaries for the Theorems \ref{T4} and \ref{T5}.

\begin{theorem}\label{T6}
Let $\Omega_4$ be a Tricomi domain for the operator $\mathcal{O}$, which is $D$-star-shaped with $D=-(m_1+2)x\partial_x-(m_2+2)y\partial_y$ and $m_1$ odd and $\frac{m_2}{2}\in\mathbb{N}$. Let $u\in C^2(\overline{\Omega_4})$ be a regular solution of 

\begin{equation}
 \begin{cases}
\mathcal{O}(u):=-y^{m_1}u_{xx}-x^{m_2}u_{yy}=u|u|^{\alpha-1} &\mbox{in} \quad\Omega_4, 
\\ \quad \ \ \;u=0 \, &\mbox{on} \quad AC\cup\sigma_3\subseteq\partial\Omega_4,
\end{cases}
\end{equation}
with $\alpha>2^*(m_1,m_2)-1=\frac{m_1+m_2-m_1m_2+8}{m_1+m_2+m_1m_2}$. Then $u\equiv0$ a.e. in $\Omega_4$.
\end{theorem}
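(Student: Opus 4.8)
The plan is to run the same five–step scheme used for Theorems \ref{T4} and \ref{T5}, now on $\Omega_4$ and with the Poho\^zaev identity of Theorem \ref{T3}, the only genuinely new ingredient being the treatment of $\sigma_3$, which is here the characteristic segment $\{x=0,\ 2y_0\le y\le 0\}$ rather than an elliptic arc. First, with $f(u)=u|u|^{\alpha-1}$ one has $F(u)=\frac{|u|^{\alpha+1}}{\alpha+1}$ and $uf(u)=|u|^{\alpha+1}$, so the left–hand side of \eqref{PI3} becomes
\[
\left(\frac{m_1+m_2+4}{\alpha+1}-\frac{m_1+m_2+m_1m_2}{2}\right)\!\int_{\Omega_4}|u|^{\alpha+1}
=\frac{m_1+m_2-m_1m_2+8-\alpha(m_1+m_2+m_1m_2)}{2(\alpha+1)}\int_{\Omega_4}|u|^{\alpha+1},
\]
whose coefficient is strictly negative exactly because $\alpha>2^*(m_1,m_2)-1$. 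Hence it suffices to show that $\tfrac12\big[\int_{BC''\cup\sigma_3}\omega_1+\int_{BC''}\omega_2\big]\ge 0$, since then $\int_{\Omega_4}|u|^{\alpha+1}=0$ and $u\equiv 0$.

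Next I would observe that on $\sigma_3=\{x=0\}$ the Dirichlet condition forces the tangential derivative $u_y$ to vanish, so $Du=-(m_1+2)xu_x-(m_2+2)yu_y=0$ there, while the remaining vector in $\omega_1$, namely $(y^{m_1}u_x^2+x^{m_2}u_y^2)\bigl(-(m_1+2)x,-(m_2+2)y\bigr)$, has vanishing first component on $\{x=0\}$; since the exterior normal to $\sigma_3$ is $(\pm1,0)$, we get $\omega_1\equiv 0$ on $\sigma_3$, hence $\int_{\sigma_3}\omega_1=0$. (This is precisely where the characteristic segment is exploited and where a genuine elliptic arc $\sigma_3\subset\{x<0\}$ would fail, because of the $x^{m_2}$ weight; that is the reason one can only treat $\Omega_4$.) It then remains to prove $I:=\int_{BC''}(\omega_1+\omega_2)\ge 0$. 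As in Theorem \ref{T4}, $BC''$ is an integral curve of $D$ through the origin, so $D$ is tangent to $BC''$, the terms in $\omega_1+\omega_2$ carrying the factor $D\cdot\eta$ drop out, and $I=\int_{BC''}[2Du-(m_1+m_2+m_1m_2)u]\,\mathcal{X}u\cdot\eta\,ds$. Parameterising $BC''$ via $(-y)^{(m_1+2)/2}=\tfrac{m_1+2}{m_2+2}x^{(m_2+2)/2}$ as in \eqref{BC3}--\eqref{nBC3}, writing $\phi$ for the trace of $u$ on $BC''$, expressing $Du$ and $\mathcal{X}u\cdot\eta$ through the characteristic derivative $\phi'$ along $BC''$, and integrating by parts the $u\,\phi'$ term (using $\phi=0$ at both endpoints $B''\in\sigma_3$ and $C''\in AC''$), one should arrive at an expression
\[
I=c\left[A\int_0^{x_{c''}}s^{\gamma_1}(\phi'(s))^2\,ds-B\int_0^{x_{c''}}s^{\gamma_2}(\phi(s))^2\,ds\right],
\qquad c,A,B>0,\ \ \gamma_1=\gamma_2+2,
\]
with $\gamma_1,\gamma_2$ and $A,B$ explicit in $m_1,m_2$. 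Finally, since $\gamma_1=\gamma_2+2$ and $\phi(0)=0$, Lemma \ref{HSI} with $p=q=2$, $v(s)=s^{\gamma_1}$, $w(s)=s^{\gamma_2}$ on $(0,x_{c''})$ yields $\big[\int s^{\gamma_2}\phi^2\big]^{1/2}\le C_L\big[\int s^{\gamma_1}(\phi')^2\big]^{1/2}$ with $M_L\le C_L\le r(2,2)M_L$; computing $G_L$ explicitly shows $M_L<\infty$ and $C_L^2\le A/B$, so $B\int s^{\gamma_2}\phi^2\le A\int s^{\gamma_1}(\phi')^2$, i.e. $I\ge 0$, completing the proof.

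The part I expect to be genuinely delicate is the change of variables on $BC''$: carrying the $x^{m_2}$ weight through so that the exponents $\gamma_1,\gamma_2$ and the constants $A,B$ emerge in a shape to which Lemma \ref{HSI} can be applied with a constant one can actually control — this is exactly the ``standard computation'' difficulty the introduction warns about. The other slightly subtle point is the verification that $\omega_1$ vanishes identically on the characteristic segment $\sigma_3$, which is the structural reason the argument closes for $\Omega_4$ but not for a general $\Omega_3$; the remaining bookkeeping (Steps on $f,F$, on $\int_{\sigma_3}\omega_1$, and the final algebraic combination) is as routine as in Theorems \ref{T4} and \ref{T5}.
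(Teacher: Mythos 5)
Your proposal follows the paper's own five--step scheme almost verbatim, and your treatment of $\sigma_3$ is fine: since $u=0$ on the segment $\{x=0\}$ one has $u_y=0$ there, hence $Du=0$, and the remaining vector in \eqref{w1} has vanishing first component while $\eta=(\mp1,0)$, so $\omega_1\equiv0$ on $\sigma_3$ (this is the content of the paper's Step 3; note only that $\{x=0\}$ is the degeneracy line of $\mathcal{O}$, not a characteristic curve, and that the obstruction for a general $\Omega_3$ is the indefiniteness of $y^{m_1}u_x^2+x^{m_2}u_y^2$ on an arc with $y<0$, not the $x^{m_2}$ weight alone).

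The genuine gap is exactly at the step you flag and do not compute, namely the claim that the $BC''$ contribution ``should arrive at'' $I=c\bigl[A\int s^{\gamma_1}(\phi')^2-B\int s^{\gamma_2}\phi^2\bigr]$ with $A,B>0$; this cannot be transplanted by analogy from Theorems \ref{T4}--\ref{T5}, because $\Omega_4$ sits on the \emph{opposite} side of its characteristic through the origin. For $\Omega_1$ the domain lies above $BC$ and the outward normal \eqref{nAC1}--\eqref{nBC1} has negative second component, whereas $\Omega_4$ lies below $BC''$ and the outward normal \eqref{nBC3} has positive second component. Along $BC''$ one still has $Du=(m_2+2)(-y)\,\partial_\tau u$, where $\partial_\tau u$ is the derivative along $BC''$ toward the origin, but now $\mathcal{X}u\cdot\eta=-(-y)^{m_1/2}x^{m_2/2}\,\partial_\tau u$ carries a minus sign: e.g.\ for $m_1=1$, $m_2=0$ at a point of $BC''$ with $y=-1$ one has $\eta\propto(1,1)$, $\mathcal{X}u=(u_x,-u_y)$, so $\mathcal{X}u\cdot\eta\propto u_x-u_y$, while the derivative toward the origin is $\propto -u_x+u_y$ (on $BC$ for $\Omega_1$, by contrast, $\eta\propto(1,-1)$ and $\mathcal{X}u\cdot\eta\propto u_x+u_y$ agrees in sign with $\partial_\tau u$). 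Carrying this through the same parameterization and integration by parts, the coefficient of $\int(\phi')^2$ in $I=\int_{BC''}[2Du-(m_1+m_2+m_1m_2)u]\,\mathcal{X}u\cdot\eta\,ds$ comes out \emph{negative} and that of $\int\phi^2$ \emph{positive}, i.e.\ with the signs reversed relative to what you (and the paper's unproved ``analogous'' formula in its Step 4) assert. Lemma \ref{HSI} then yields $I\le0$ rather than the needed $I\ge0$, and \eqref{PI3} only gives a nonpositive multiple of $\int_{\Omega_4}|u|^{\alpha+1}$ equal to $\tfrac12 I\le0$, which is vacuous. So as written the decisive inequality is not established: the sign of the $BC''$ boundary term must be computed for this geometry, and with the outward normal \eqref{nBC3} it does not have the sign your argument requires.
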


\begin{proof} 
The proff will be analogous to the proof of the Theorem \ref{T4}. 
\vspace{0,3cm}

\textbf{Step 1.} The same Step 1 of the Theorem \ref{T4}.
\vspace{0,3cm}

\textbf{Step 2.} Applying the Poho\^{z}aev-type identity \eqref{PI3}, we obtain
\begin{align*}
    (m_1+m_2+4)\int_{\Omega_4} F(u)-\frac{m_1+m_2+m_1m_2}{2}&\int_{\Omega_4} uf(u)= \nonumber\\ &\frac{1}{2}\left[\int_{BC''\cup\sigma_3}\omega_1+\int_{BC''}\omega_3\right].
\end{align*}

\textbf{Step 3.} Analogous to Step 3 of the Theorem \ref{T4}, we have 

\begin{align*}
 \int_{\sigma_3}\omega_1ds =0.
\end{align*}

\textbf{Step 4.} Analogous to Step 4 of the Theorem \ref{T4}, we get
\begin{align*}
    &I=2(m_2+2)\Bigl(\frac{m_2+2}{m_1+2}\Bigl)^\frac{m_2}{m_2+2}\int^0_{y_{c''}}(-t)^\frac{m_1+2m_2+m_1m_2+2}{m_2+2}(\phi'(t))^2\, dt \nonumber\\ 
    &-\frac{1}{2}\frac{(m_1+m_2+m_1m_2)^2}{m_2+2}\Bigl(\frac{m_2+2}{m_1+2}\Bigl)^\frac{m_2}{m_2+2}\int^0_{y_{c''}}(-t)^\frac{m_1+m_1m_2-2}{m_2+2}(\phi(t))^2 \, dt \, ,
\end{align*}

\textbf{Step 5.} The same Step 5. of the Theorem \ref{T4}.
\end{proof}




\section*{Acknowledgements}
    C. Reyes Peña is doctoral student in the Graduate Program in Mathematics at Federal University of S\~{a}o Carlos (PPGM-UFSCar) (2019-2024). This study was financed in part by the Coordinação de Aperfeiçoamento de Pessoal de Nivel Superior Brasil (CAPES) Finance Code 001. O H Miyagaki was supported in part by CNPq Nº 303256/2022-2 and FAPESP Nº 2020/16407-1.

\nolinenumbers


\end{document}